\numberwithin{equation}{section}
\newtheorem{theorem}{Theorem}
\newtheorem{lemma}{Lemma}[section]
\newtheorem{proposition}{Proposition}[section]
\newtheorem{corollary}{Corollary}[section]
\theoremstyle{definition}
\newtheorem{definition}{Definition}[section]
\newtheorem*{remark}{Remark}
\newcommand{\norm}[1]{\left\|#1\right\|} 
\newcommand{\pare}[1]{\left(#1\right)} 
\newcommand{\brackets}[1]{\left[#1\right]} 
\newcommand{\set}[2]{\left\{#1 \; :\; #2\right\}} 
\DeclareMathOperator*{\diver}{div} 
\DeclareMathOperator*{\diam}{diam} 
\DeclareMathOperator*{\dist}{dist} 
\newcommand{\R}{\mathbb R} 
\newcommand{\N}{\mathbb N} 
\newcommand{\T}{\mathcal{T}}
\newcommand{\M}{\mathcal{M}}
\renewcommand{\S}{\mathcal{S}}
\renewcommand{\L}{\mathcal{L}}
\renewcommand{\geq}{\geqslant}
\renewcommand{\leq}{\leqslant}
\def\Xint#1{\mathchoice
   {\XXint\displaystyle\textstyle{#1}}%
   {\XXint\textstyle\scriptstyle{#1}}%
   {\XXint\scriptstyle\scriptscriptstyle{#1}}%
   {\XXint\scriptscriptstyle\scriptscriptstyle{#1}}%
   \!\int}
\def\XXint#1#2#3{{\setbox0=\hbox{$#1{#2#3}{\int}$}
     \vcenter{\hbox{$#2#3$}}\kern-.5\wd0}}
\def\dashint{\Xint-}
\begin{document}

\title{$p$-harmonic functions by way of intrinsic mean value properties}

\author[Arroyo]{\'Angel Arroyo}
\address{MaLGa Center, Department of Mathematics, University of Genoa, Via Dodecaneso 35, 16146 Genova, Italy}
\email{arroyo@dima.unige.it}

\author[Llorente]{Jos\'e G. Llorente}
\address{Departament de Matem\`{a}tiques, Universitat Aut\`onoma de Barcelona, 08193 Bellaterra, Barcelona, Spain}
\email{jgllorente@mat.uab.cat}

\subjclass[2010]{31B35, 31C05, 31C45, 35A35, 35B05, 35J92}

\keywords{$p$-laplacian, $p$-harmonic functions, Dirichlet problem, mean value properties, $p$-harmonious functions, approximation of solutions.}

\thanks{Partially supported by grants MTM2017-85666-P, 2017 SGR 395. Part of this work has been carried out at the Machine Learning Genoa (MaLGa) center, Universit\'a di Genova (IT). \'A. A. is supported by a UniGe starting grant ``curiosity driven''.}

\begin{abstract}
Let $\Omega\subset\mathbb{R}^n$ be a bounded domain satisfying the
uniform exterior cone condition. We establish existence and
uniqueness of continuous solutions of the Dirichlet Problem
associated to certain intrinsic nonlinear mean value properties in $
\Omega $. Furthermore we show that, when properly normalized, such
functions converge to the $p$-harmonic solution of the Dirichlet
problem in $\Omega$, for $p\in[2,\infty)$. The proof of existence is
constructive and the methods are entirely analytic, a fundamental
tool being the construction of explicit, $p$-independent barrier
functions in $\Omega$.
\end{abstract}

\maketitle

\section{Introduction}
\subsection{Background}

Let $\Omega\subset\R^n$ be a bounded domain. The aim of this paper
is twofold. First we discuss existence and uniqueness of continuous
solutions of the Dirichlet Problem in $\Omega$ associated to certain
nonlinear mean value properties in balls of variable radius $B(x,
\rho (x)) \subset \Omega$. Then we study the convergence of such
functions to the solution of the $p$-harmonic Dirichlet problem in
$\Omega$.

\

A primary motivation of our work is the  mean value property for
harmonic functions, saying that a continuous function $u$ in a
domain $\Omega \subset \R^n$ is harmonic if and only if
\begin{equation}\label {basic mvp}
u(x) = \dashint_{B(x,\rho )} \hspace{-10pt} u(y) \ dy
\end{equation}
for each $x\in \Omega$ and each $\rho >0$ such that $ 0 < \rho <
\dist (x, \partial \Omega )$. The mean value property plays a
relevant role in Geometric Function Theory and is indeed the
fundamental tool of the interplay between Classical Potential
Theory, Probability and Brownian motion.

\

A theorem due to Volterra (for regular domains) and Kellogg (in the
general case) says that if $\Omega\subset \R^n$ is bounded, $u\in
C(\overline{\Omega})$ and for each $x\in \Omega$ there is a radius
$\rho = \rho (x)$ with $0< \rho \leq \dist (x,
\partial \Omega )$ such that \eqref{basic mvp} holds then $u$ is harmonic in $\Omega$ (\cite{V,K}). Therefore, under
appropriate hypothesis, the mean value property for a single radius
(depending on the point) implies harmonicity.

\

In the last decades, substantial efforts have been devoted to
determine the stochastic structure of certain nonlinear PDE's, a
crucial step being the identification of the corresponding
(nonlinear) mean value properties. In this paper we will focus on
the $p$-\emph{laplacian} which, for $1 < p < \infty$, is the
divergence-form differential operator given by
\begin{equation*}
\Delta_p u  :\,= \diver ( |\nabla u |^{p-2} \nabla u ).
\end{equation*}
Weak solutions $ u \in W_{loc}^{1,p}(\Omega )$ of $\Delta_p u = 0$
are said $p$-\emph{harmonic} functions. Observe that the theory is
nonlinear unless $p =2$, in which case we recover harmonic
functions. We refer to \cite{L} for background and basic properties
of $p$-harmonic functions.

\

Unfortunately, the nature of the connections between $p$-harmonic
functions and mean value properties is more delicate when $p\neq 2$.
We start by some basic facts in the smooth case. If $u\in C^2$ and
$\nabla u \neq 0$ then a direct computation gives
\begin{equation}\label{p-lapla}
    \Delta_p u
    =
    |\nabla u |^{p-2} \pare{ \Delta u +
(p-2)\frac{\Delta_{\infty}u}{|\nabla u |^2} },
\end{equation}
where
$$
\Delta_{\infty}u = \sum_{i,j=1}^n u_{x_i}u_{x_j}u_{x_i , x_j}
$$
is the so called \emph{$\infty$-laplacian} in $\R^n$. Then
\eqref{p-lapla} shows that, in the smooth case and away from the
critical points, the $p$-laplacian can be understood as a linear
combination of the usual laplacian and the normalized
$\infty$-laplacian.

\

By using the viscosity characterization of $p$-harmonic functions
(\cite{JUU-LIN-MAN}), Manfredi, Parviainen and Rossi characterized
$p$-harmonicity in terms of nonlinear mean value properties in
\cite{MAN-PAR-ROS-10}. Namely, a function $u\in C(\Omega)$ is
$p$-harmonic in $\Omega\subset\R^n$ if and only if $u$ satisfies the
\emph{asymptotic $p$-mean value property}
\begin{equation} \label{p-mvp}
    u(x)
    =
    \frac{p-2}{n+p}\,\pare{\frac{1}{2}\,\sup_{B(x,\varepsilon)}u + \frac{1}{2}\,\inf_{B(x,\varepsilon)}u}
    +
    \frac{n+2}{n+p}\,\dashint_{B(x,\varepsilon)}\hspace{-10pt}u(y)\ dy+o(\varepsilon^2)
    \quad
\end{equation}
in a viscosity sense for each $x\in\Omega$. If $n=2$ this
characterization holds also in the classical sense
(\cite{LIN-MAN,ARR-LLO-16-2}), while for $n\geq 3$ the question of
whether  $p$-harmonic functions satisfy \eqref{p-mvp} in the
classical sense is still open.
Note that if $p=2$ then \eqref{p-mvp} is actually equivalent to $u$ being harmonic.

\

From a probabilistic point of view, the influential work of Peres,
Schramm, Sheffield and Wilson (\cite{PER-SCH-SHE-WIL}) established a
game-theoretic interpretation of the $\infty$-laplacian and the
functional equation
\begin{equation*}
    u_\varepsilon(x)
    =
    \frac{1}{2}\,\sup_{B(x,\varepsilon)}u_\varepsilon + \frac{1}{2}\,\inf_{B(x,\varepsilon)}u_\varepsilon
\end{equation*}
appears as a \emph{dynamic programming principle} of a two-player
zero-sum tug-of-war game. A similar interpretation for the
$p$-laplacian, $p\in[2,\infty]$, was considered in \cite{PER-SHE}.
Manfredi, Parviainen and Rossi gave a systematic twist to the
theory, from both an analytic and probabilistic point of view
(\cite{MAN-PAR-ROS-10, MAN-PAR-ROS-12}). In particular, in
\cite{MAN-PAR-ROS-12} the term $p$-\emph{harmonious} was
introduced to denote (not necessarily continuous) solutions of the
functional equation
\begin{equation}\label{p-rmvp}
    u_\varepsilon(x)
    =
    \frac{p-2}{n+p}\,\pare{\frac{1}{2}\,\sup_{B(x,\varepsilon)}u_\varepsilon + \frac{1}{2}\,\inf_{B(x,\varepsilon)}u_\varepsilon}
    +
    \frac{n+2}{n+p}\,\dashint_{B(x,\varepsilon)}\hspace{-10pt}u_\varepsilon(y)\ dy
\end{equation}
for each $x\in\Omega$. Note, however, that \eqref{p-rmvp} raises some
technical problems, coming from the fact that the balls
$B(x,\varepsilon)$ eventually escape the domain. The authors in
\cite{MAN-PAR-ROS-12} extended a given $f\in C(\partial \Omega ) $
to the strip $\displaystyle \{ x\in \R^n \setminus \Omega \, :
\dist(x,
\partial \Omega ) \leq \varepsilon \} $ and proved that, if $\Omega
\subset \R^n$ is bounded and satisfies a so called \emph{boundary
regularity condition}, then there is a unique $p$-harmonious
function $u_{\varepsilon}$ having $f$ as boundary values (in the
extended sense). Furthermore, $u_{\varepsilon}\to u$ uniformly in
$\overline{\Omega}$ as $\varepsilon \to 0$, where $u$ is the unique
$p$-harmonic function solving the Dirichlet Problem in $\Omega$ with
boundary data $f$. It should be remarked that domains satisfying a
uniform exterior cone condition (see \Cref{DEF-ECC} below) verify
the boundary regularity condition, in the sense of
\cite{MAN-PAR-ROS-12}. See also \cite{LUI-PAR-SAK, ARR-HEI-PAR,
DEL-MAN-PAR} for further approaches.

\subsection{Main results}

In this paper we deal with a modified version of \eqref{p-rmvp} in
which the balls $B(x,\varepsilon)$ are replaced by balls of variable
radius $B(x,\rho(x))$, where $0<\rho(x)\leq\dist(x,\partial\Omega)$.
We want to emphasize that the variable radius setting is natural for
at least two reasons: it is closely related to the classical theory
(remind Volterra-Kellogg theorem) and it is intrinsic, in the sense
that no extension of the domain is needed (this explains the term
\emph{intrinsic} in the title).

\

\Cref{MAIN-THM-EXISTENCE} below is an existence and uniqueness
result for the Dirichlet Problem associated to intrinsic mean value
properties. It extends the existence result in \cite{MAN-PAR-ROS-12}
to the variable radius setting, substantially relaxes  the
geometrical restrictions  of \cite{ARR-LLO-16-1} and, as an
additional feature, the solution is constructively obtained via
iteration of the averaging operators $\T_{\rho,p}$ (see
\eqref{Trho}).

\

\Cref{MAIN-THM-CONVERGENCE} is an approximation result showing
that, when properly normalized, solutions of intrinsic mean value
properties with fixed continuous boundary data converge to the
solution of the $p$-harmonic Dirichlet Problem with the same
boundary data. The combination of \Cref{MAIN-THM-EXISTENCE} and
\Cref{MAIN-THM-CONVERGENCE} provides therefore an intrinsic and
constructive method of obtaining solutions of $p$-harmonic Dirichlet
problems which might be of interest from a computational point of
view.

\

The fundamental tool to prove \Cref{MAIN-THM-EXISTENCE} and
\Cref{MAIN-THM-CONVERGENCE} is the construction of explicit barriers
which do not depend on $p$ and work simultaneously for the operators
$\T_{\rho,p}$ and the $p$-laplacian (\Cref{mainthmloc}). We believe
that this construction has an independent interest which might be
useful in other situations.

\

Even though the motivation for studying such problems is partly
probabilistic (namely the search of ``natural'' stochastic processes
associated to the $p$-laplacian), our arguments and techniques are
entirely analytic.

\

Before stating the main results, let us introduce some necessary
definitions.

\begin{definition}
Let $\Omega\subset\R^n$ be a bounded domain. We say that a function
$\rho:\overline\Omega \to(0,\infty)$ is an \emph{admissible radius
function} in $\Omega$ if
\begin{enumerate}
\item $0<\rho(x)\leq\dist(x,\partial\Omega)$ for every $x\in\Omega$, and
\item $\rho(x)=0$ if and only if $x\in\partial\Omega$.
\end{enumerate}
Hereafter we will write  $B_\rho(x):\,=B(x,\rho(x))$.
\end{definition}

\begin{definition}
Let $\Omega\subset\R^n$ be a bounded domain and $\rho$ an admissible
radius function in $\Omega$. Let $\S_\rho$ and $\M_\rho$ be the
operators in $L^\infty(\overline\Omega)$ defined by
\begin{equation*}
    \S_\rho u(x)
    :\,=
    \begin{cases}
        \displaystyle \frac{1}{2}\sup_{B_\rho(x)}u+\frac{1}{2}\inf_{B_\rho(x)}u
        & \text{ if } x\in\Omega,
        \\
        u(x)
        & \text{ if } x\in\partial\Omega,
    \end{cases}
\end{equation*}
and
\begin{equation*}
    \M_\rho u(x)
    :\,=
    \begin{cases}
        \displaystyle \dashint_{B_\rho(x)}u(y)\ dy
        & \text{ if } x\in\Omega,
        \\
        u(x)
        & \text{ if } x\in\partial\Omega,
    \end{cases}
\end{equation*}
for every $u\in L^\infty(\overline\Omega)$. In addition, for a fixed
$p\in[2,\infty)$, we define the operator $\T_{\rho,p}$ in
$L^\infty(\overline\Omega)$ as the following linear combination of
$\S_\rho$ and $\M_\rho$
\begin{equation}\label{Trho}
    \T_{\rho,p}
    :\,=
    \frac{p-2}{n+p}\,\S_\rho+\frac{n+2}{n+p}\,\M_\rho.
\end{equation}
\end{definition}

Note that $\T_{\rho , 2} = \M_{\rho}$ and that, formally, $\T_{\rho
, \infty}=\S_{\rho}$. As the following proposition says,
$\T_{\rho,p}$ preserves the class $C(\overline\Omega)$, provided the
admissible radius function $\rho$ is continuous.
\begin{proposition}[{\cite[Proposition 4.1]{ARR-LLO-18}}]\label{PROP-TRHO-CONT}
If $\rho\in C(\overline\Omega)$, then
$\T_{\rho,p}:C(\overline\Omega)\to C(\overline\Omega)$.
\end{proposition}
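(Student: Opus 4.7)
The plan is to exploit the linearity of $\T_{\rho,p}$ as a convex combination of $\S_\rho$ and $\M_\rho$: it suffices to prove separately that $\S_\rho$ and $\M_\rho$ send $C(\overline\Omega)$ into $C(\overline\Omega)$. Continuity at boundary points is automatic by the very definition of the operators, so the content is continuity at interior points and the matching as one approaches $\partial\Omega$.

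First, I would handle an interior point $x_0 \in \Omega$. Let $x_k \to x_0$; since $\rho$ is continuous and positive in $\Omega$, $\rho(x_k) \to \rho(x_0) > 0$. For $\M_\rho$, the identity
\begin{equation*}
\M_\rho u(x_k) = \frac{1}{|B(0,1)|\,\rho(x_k)^n}\int_{\R^n} u(y)\,\chi_{B(x_k,\rho(x_k))}(y)\,dy
\end{equation*}
allows one to pass to the limit by dominated convergence, since $\chi_{B(x_k,\rho(x_k))}\to \chi_{B(x_0,\rho(x_0))}$ a.e. and $|u|$ is bounded on $\overline\Omega$. For $\S_\rho$, I would note that $u$ is uniformly continuous on $\overline\Omega$, and the closed balls $\overline{B_\rho(x_k)}$ converge to $\overline{B_\rho(x_0)}$ in the Hausdorff metric (because both center and radius vary continuously); this Hausdorff convergence, together with uniform continuity of $u$, forces $\sup_{B_\rho(x_k)}u\to\sup_{B_\rho(x_0)}u$ and similarly for the infimum.

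Next, I would treat the case $x_0\in\partial\Omega$ with $x_k\to x_0$. By continuity of $\rho$, $\rho(x_k)\to\rho(x_0)=0$, so for any $y\in B_\rho(x_k)$ one has $|y-x_0|\leq\rho(x_k)+|x_k-x_0|\to 0$. Consequently, by continuity of $u$ at $x_0$, both $\sup_{B_\rho(x_k)}u$ and $\inf_{B_\rho(x_k)}u$ (and hence their average $\S_\rho u(x_k)$), as well as $\M_\rho u(x_k)$, converge to $u(x_0)$. This matches the boundary value assigned by the definition of the operators, so no discontinuity arises across $\partial\Omega$.

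The only mildly delicate step is the Hausdorff-continuity argument controlling $\sup$ and $\inf$ for $\S_\rho$ at interior points; the rest is a standard combination of dominated convergence and uniform continuity. Since $\T_{\rho,p}$ is a nonnegative linear combination of $\S_\rho$ and $\M_\rho$, the conclusion for $\T_{\rho,p}$ follows immediately.
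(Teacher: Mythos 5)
The paper does not include a proof of this statement; it is cited from \cite[Proposition 4.1]{ARR-LLO-18}, so there is no internal argument to compare against. Your proof is correct and is the natural one: the decomposition into $\S_\rho$ and $\M_\rho$, dominated convergence with $\chi_{B(x_k,\rho(x_k))}\to\chi_{B(x_0,\rho(x_0))}$ a.e.\ for the mean, Hausdorff continuity of closed balls combined with uniform continuity of $u$ on the compact set $\overline\Omega$ for the sup/inf, and the collapse $\rho(x_k)\to 0$ forcing all three quantities to $u(x_0)$ at a boundary point. One small wording issue: ``continuity at boundary points is automatic by the very definition'' is slightly misleading on its own, since the definition only pins down the restriction to $\partial\Omega$; what actually needs (and receives, in your next paragraph) a proof is the matching as interior points approach $\partial\Omega$. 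You might also note explicitly that for sequences $x_k\in\partial\Omega$ approaching $x_0\in\partial\Omega$ the conclusion is immediate from continuity of $u$, so the only substantive case is $x_k\in\Omega$, which is the one you treat.
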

From now on we will take $C(\overline\Omega)$ as the natural
function space where the operators $\T_{\rho,p}$ are defined. In
addition, $\T_{\rho,p}$ satisfies the following properties:
\begin{enumerate}
\item \emph{Affine invariance}: if $a,b\in\R$ and $u\in C(\overline\Omega)$, then $\T_{\rho,p}\big(au+b)=a\T_{\rho,p} u+b$.
\item \emph{Monotonicity}: if $u,v\in C(\overline\Omega)$ such that $u\leq v$, then $\T_{\rho,p} u\leq\T_{\rho,p} v$.
\item \emph{Non-expansiveness}: if $u,v\in C(\overline\Omega)$, then $\|\T_{\rho,p} u-\T_{\rho,p} v\|_\infty\leq\|u-v\|_\infty$.
\item $\inf_{B_\rho(x)}u\leq\T_{\rho,p} u(x)\leq\sup_{B_\rho(x)}u$ for every $x\in\Omega$.
\end{enumerate}
It is easy to check that the $k$-th iteration of $\T_{\rho,p}$,
denoted by $\T_{\rho,p}^k$, also satisfies the above four
properties.

\

Let  $f\in C(\partial\Omega)$. In this paper we are interested in
existence and uniqueness of solutions of the Dirichlet Problem
\begin{equation}\label{dirTrho}
\begin{cases} \T_{\rho,p}u   & =  \, u \, \, \, \,  \text{in} \, \, \, \Omega, \\
u  & =  \, f \, \, \, \,  \text{on} \, \, \,
\partial \Omega.
\end{cases}
\end{equation}
We will also discuss assumptions under which normalized solutions of
\eqref{dirTrho} converge to the corresponding solution of the
Dirichlet Problem for the $p$-laplacian.

\

Notice that \eqref{dirTrho} is equivalent to the problem of finding
a fixed point of $\T_{\rho,p}$ among all continuous functions with
prescribed continuous boundary data $f$. Given $f\in
C(\partial\Omega)$, we define $\mathcal{K}_f$ as the set of all
\emph{norm-preserving continuous extensions of $f$ to
$\overline\Omega$},
\begin{equation}
\label{Kf} \mathcal{K}_f :\,=
                \set{u\in C(\overline\Omega)}{u\big|_{\partial\Omega}=f
                \text{ and } \|u\|_{\infty,\Omega}=\|f\|_{\infty,\partial\Omega}}.
\end{equation}
By \Cref{PROP-TRHO-CONT} and the non-expansiveness of the operator,
it follows that $\T_{\rho,p}(\mathcal{K}_f)\subset \mathcal{K}_f$.
Furthermore, if $u$ satisfies \eqref{dirTrho} then  $
\|u\|_{\infty,\Omega} = \|f\|_{\infty,\partial\Omega}$ by the
Comparison Principle (\Cref{THM-COMPARISON-PPLE}).
Therefore the Dirichlet Problem \eqref{dirTrho} has a solution in
$C(\overline{\Omega})$ if and only if $\T_{\rho,p}$ has a fixed
point in $\mathcal{K}_f$.

\

In order to state the main theorems of this work, we need to impose
certain geometrical condition on the boundary of the domain.

\begin{definition}[Uniform Exterior Cone Condition]\label{DEF-ECC}
Let $\alpha\in(0,\frac{\pi}{2})$ and $r>0$. We denote by
$K_{\alpha,r}$ the truncated circular cone
\begin{equation*}
    K_{\alpha,r}
    =
    \set{x\in\R^n}{x_1\leq-|x|\cos\alpha \ \text{ and } \ |x|\leq r}.
\end{equation*}
We say that a domain $\Omega\subset\R^n$ satisfies the \emph{uniform
exterior cone condition} if there exist constants
$\alpha\in(0,\frac{\pi}{2})$ and $r>0$ such that for every
$\xi\in\partial\Omega$ there is a rotation $R\in SO(n)$ in $\R^n$
such that
\begin{equation*}
                \xi+R(K_{\alpha,r})
                \subset
                \R^n\setminus\Omega.
\end{equation*}
\end{definition}

\begin{remark}
Let $\Omega\subset\R^n$ be a bounded domain. Then $\Omega$ is
Lipschitz if and only if both $\Omega$ and $\R^n\setminus\Omega$
satisfy the uniform exterior cone condition (see \cite{GRI}).
\end{remark}

We now state the first main result of this paper (compare with
\cite{ARR-LLO-16-1} where the same result was proven under the
assumption that $\Omega$ is strictly convex and the admissible
radius function is $1$-Lipschitz). See \cite{MAN-PAR-ROS-12,
LUI-PAR-SAK} for previous versions in the constant radius case.

\begin{theorem}\label{MAIN-THM-EXISTENCE}
Let $\Omega\subset\R^n$ be a domain satisfying the uniform exterior
cone condition and $p \in [2, \infty)$. Suppose that $\rho\in
C(\overline\Omega)$ is a continuous admissible radius function in
$\Omega$ satisfying
\begin{equation*}
                \lambda\dist(x,\partial\Omega)^\beta
                \leq
                \rho(x)
                \leq
                \Lambda\dist(x,\partial\Omega)
\end{equation*}
for all $x\in\Omega$, where
\begin{equation}\label{ctts}
                \beta
                \geq
                1,
                \quad
                0
                <
                \Lambda
                <
                1-\pare{\frac{p-2}{n+p}}^{1/\beta}
                \quad\text{ and }\quad
                0
                <
                \lambda
                \leq
                \Lambda \pare{\frac{\diam\Omega}{2}}^{1-\beta}.
\end{equation}
Then for any $f\in C(\partial\Omega)$ there exists a unique solution
$u_\rho\in C(\overline\Omega) $ to the Dirichlet Problem
\begin{equation}\label{DP}
                \begin{cases}
                \T_{\rho,p} u_\rho=u_\rho
                & \mbox{ in } \Omega,
                \\
                u_\rho=f
                & \text{ on } \partial\Omega,
                \end{cases}
\end{equation}
where $\T_{\rho,p}$ is the averaging operator defined in
\eqref{Trho}. Furthermore, for any norm-preserving continuous
extension $u\in C(\overline\Omega)$ of $f$, the sequence of iterates
$\{\T_{\rho,p}^ku\}_k$ converges uniformly to $u_\rho$ in
$\overline\Omega$.
\end{theorem}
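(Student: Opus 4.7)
The plan is to construct $u_\rho$ as the uniform limit of the iterates $\T_{\rho,p}^k u$ starting from an arbitrary $u\in\mathcal{K}_f$, and to obtain uniqueness from the Comparison Principle (\Cref{THM-COMPARISON-PPLE}). The argument splits into three pieces: (i) the iterates form an equicontinuous, uniformly bounded family on $\overline\Omega$; (ii) any uniform subsequential limit is a fixed point of $\T_{\rho,p}$; (iii) the fixed point is unique, so the full sequence converges. Boundedness is built in because $\T_{\rho,p}(\mathcal{K}_f)\subset\mathcal{K}_f$, and uniqueness follows immediately from the Comparison Principle, so the whole argument hinges on boundary equicontinuity.

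For boundary equicontinuity at an arbitrary $\xi\in\partial\Omega$, the plan is to build an explicit continuous, $p$-independent barrier $w_\xi\in C(\overline\Omega)$ with $w_\xi(\xi)=0$, $w_\xi>0$ on $\overline\Omega\setminus\{\xi\}$, and satisfying $\T_{\rho,p}w_\xi\leq w_\xi$ in $\Omega$ (this is exactly what \Cref{mainthmloc} is set up to supply). The natural ansatz uses the uniform exterior cone: after rotating so that the cone $\xi+R(K_{\alpha,r})$ points in the $-e_1$ direction, one tries a profile of the form $w_\xi(x)=\Phi(|x-\xi^*|)\Psi(\theta)$, where $\xi^*$ sits inside the exterior cone and $\theta$ is the angular variable about it; the radial part contracts enough to beat the angular loss produced by taking suprema over $B_\rho(x)$. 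Using affine invariance and monotonicity, the barrier sandwiches $\T_{\rho,p}^k v$ near $\xi$ between $f(\xi)\pm C_\varepsilon\, w_\xi\pm\varepsilon$ for every $v\in\mathcal{K}_f$ and every $k$, yielding a modulus of continuity at $\xi$ uniform in $k$. Interior equicontinuity is then inherited from the boundary modulus via the non-expansiveness and monotonicity of $\T_{\rho,p}$, after which Arzel\`a--Ascoli gives a uniform subsequential limit $u_\rho\in C(\overline\Omega)$ with $u_\rho|_{\partial\Omega}=f$.

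The hardest point is verifying $\T_{\rho,p}w_\xi\leq w_\xi$ with a barrier that does not depend on $p$. The worst case is $p\to\infty$, where the weight $\tfrac{p-2}{n+p}$ on $\S_\rho$ in \eqref{Trho} tends to $1$; bounding $\S_\rho w_\xi$ forces one to control $\sup_{B_\rho(x)}w_\xi$, and the loss incurred by this sup is exactly offset by the radial contraction of $w_\xi$ provided the radius $\rho(x)$ is small enough compared with $\dist(x,\partial\Omega)$. This is the role of the constraint
\begin{equation*}
\Lambda<1-\pare{\tfrac{p-2}{n+p}}^{1/\beta}
\end{equation*}
in \eqref{ctts}: it is tuned so that the radial factor $\Phi(|x-\xi^*|)$ contracts by a factor strictly better than the worst-case gain from $\sup_{B_\rho(x)}$ for every $p\in[2,\infty)$ simultaneously, which is precisely what is needed for a single $p$-independent supersolution. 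The lower bound $\lambda\dist(x,\partial\Omega)^\beta\leq\rho(x)$ is used symmetrically to keep the mean-value term $\M_\rho w_\xi$ quantitatively controlled.

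Given the barriers and the resulting equicontinuity, I would extract a uniform subsequential limit $u_\rho=\lim_j\T_{\rho,p}^{k_j}u$. To upgrade this to a fixed point and to full convergence, the cleanest route is a monotone iteration: let $u^\pm\in\mathcal{K}_f$ be the extensions obtained by sandwiching $f$ between $\pm C w_\xi$ across the boundary via a partition of unity (so that $\T_{\rho,p}u^+\leq u^+$ and $\T_{\rho,p}u^-\geq u^-$ by the barrier inequality). Monotonicity and non-expansiveness then make $\{\T_{\rho,p}^k u^\pm\}_k$ monotone, uniformly bounded, and equicontinuous; Dini's theorem gives uniform convergence to fixed points $u^\pm_\infty$ of $\T_{\rho,p}$ in $\mathcal{K}_f$. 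The Comparison Principle forces $u^+_\infty=u^-_\infty=:u_\rho$, yielding both existence and uniqueness. Finally, for an arbitrary $u\in\mathcal{K}_f$, sandwich $u^-\leq u\leq u^+$ (after possibly rescaling the barrier) and apply monotonicity of $\T_{\rho,p}^k$ to conclude $\T_{\rho,p}^k u\to u_\rho$ uniformly on $\overline\Omega$.
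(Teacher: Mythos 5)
Your high-level skeleton (barriers for boundary equicontinuity, Comparison Principle for uniqueness, Arzel\`a--Ascoli for a subsequential limit) is the right one and matches the paper, but three of the load-bearing steps are either misattributed or would not go through as stated.

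First, you assign the constraint $\Lambda<1-\bigl(\frac{p-2}{n+p}\bigr)^{1/\beta}$ to the barrier construction, claiming it is tuned so the radial contraction of $w_\xi$ beats the gain from $\sup_{B_\rho(x)}$. In the paper this is not what happens: \Cref{mainthmloc} produces $w_\xi$ satisfying $w_\xi\geq\T_{\rho,p}w_\xi$ for \emph{every} admissible radius function $\rho$ and every $p\in[2,\infty]$, with no restriction on $\lambda,\Lambda,\beta$ at all --- the inequality is proved pointwise for arbitrary balls $B(x,\varrho)\subset\Omega$. The constraints in \eqref{ctts} are needed for the \emph{interior} equicontinuity of the iterates (the paper imports \cite[Theorem 4.5]{ARR-LLO-18}), and this is a genuinely separate estimate, not something ``inherited from the boundary modulus via non-expansiveness and monotonicity''. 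The classical translation argument that would let boundary control propagate inward breaks when $\rho$ is $x$-dependent, because translation no longer commutes with $\T_{\rho,p}$. As written, your proposal simply has no interior equicontinuity argument.

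Second, your fixed-point step via monotone iteration from global sub/supersolutions $u^\pm$ does not survive inspection. You propose to build $u^\pm$ ``by sandwiching $f$ between $\pm Cw_\xi$ across the boundary via a partition of unity'', but $\T_{\rho,p}$ is nonlinear (the $\S_\rho$ part), so a convex combination of pointwise barriers is not a supersolution; only infima of supersolutions are. Even an infimum construction $\inf_\xi\{f(\xi)+Cw_\xi(\cdot)\}$ fails to match $f$ exactly on $\partial\Omega$ (it overshoots by the $\eta$ slack in the modulus), and one must also check $\|u^\pm\|_\infty=\|f\|_\infty$ to stay in $\mathcal{K}_f$. This is precisely the Perron-type machinery the paper warns does not adapt to the nonlocal $\T_{\rho,p}$. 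The paper's route instead applies the non-expansiveness lemma (\Cref{Banach}) to show the defect $d=\lim_k\|\T^{k+1}u-\T^ku\|_\infty$ equals $\|\T^{\ell+1}v-\T^\ell v\|_\infty$ for every $\ell$, then forces $d=0$ by an iteration/contradiction argument that exploits the averaging structure of $\M_\rho$ and the boundedness of $\mathcal{K}_f$: if $d>0$, the equality $\M_\rho(\T^\ell v-\T^{\ell-1}v)(x_0)=d$ propagates the value $d$ across $B_\rho(x_0)$ and yields $\T^\ell v(x_0)=v(x_0)+\ell d$, which exceeds $\|f\|_\infty$ for large $\ell$. That argument is self-contained and avoids constructing any global sub/supersolution; it is worth studying because it is the mechanism that replaces Perron.

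Third, and minor: the full-sequence convergence is handled correctly in spirit (uniqueness rules out other subsequential limits), but it still relies on the global equicontinuity you have not established. The boundary piece you do argue correctly, and it matches \Cref{THM-BOUNDARY-EQUICONTINUITY} almost verbatim (affine invariance, monotonicity, $w_\xi\geq\T_\rho w_\xi$, then $\limsup_{x\to\xi}$). Concentrate on supplying the interior equicontinuity --- that is where the constants in \eqref{ctts} actually enter --- and replace the monotone-iteration step with the asymptotic-regularity argument for non-expansive maps.
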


By letting $\beta=1$, we obtain the following corollary as an
immediate consequence.

\begin{corollary}\label{COR-EXISTENCE}
Let $\Omega\subset\R^n$ be a domain satisfying the uniform exterior
cone condition and $p\in [2, \infty)$. Suppose that $\rho\in
C(\overline\Omega)$ is a continuous admissible radius function in
$\Omega$ satisfying
\begin{equation*}
    \lambda\dist(x,\partial\Omega)
    \leq
    \rho(x)
    \leq
    \Lambda\dist(x,\partial\Omega)
\end{equation*}
for all $x\in\Omega$, where
\begin{equation*}
    0<\lambda\leq\Lambda<\frac{n+2}{n+p}.
\end{equation*}
Then for any $f\in C(\partial\Omega)$ there exists a unique solution
$u_\rho\in C(\overline\Omega)$ to the Dirichlet Problem \eqref{DP}.
Furthermore, for any norm-preserving continuous extension $u\in
C(\overline\Omega)$ of $f$, the sequence of iterates
$\{\T_{\rho,p}^ku\}_k$ converges uniformly to $u_\rho$ in
$\overline\Omega$.
\end{corollary}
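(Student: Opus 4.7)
The plan is to obtain \Cref{COR-EXISTENCE} as a direct specialization of \Cref{MAIN-THM-EXISTENCE} to the exponent $\beta=1$, so the only task is to verify that the three constraints in \eqref{ctts} collapse to the two inequalities $0<\lambda\leq\Lambda<\frac{n+2}{n+p}$ appearing in the hypothesis.

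First I will substitute $\beta=1$ in the lower bound on the radius function: the inequality $\lambda\dist(x,\partial\Omega)^\beta\leq\rho(x)$ becomes the linear lower bound $\lambda\dist(x,\partial\Omega)\leq\rho(x)$ of the corollary, while the upper bound $\rho(x)\leq\Lambda\dist(x,\partial\Omega)$ is unaffected. Next, the upper bound on $\Lambda$ simplifies, using $1/\beta=1$, to
\begin{equation*}
\Lambda<1-\pare{\frac{p-2}{n+p}}^{1/\beta}=1-\frac{p-2}{n+p}=\frac{n+2}{n+p},
\end{equation*}
which is exactly the bound on $\Lambda$ stated in the corollary. Finally, the exponent $1-\beta$ in $\lambda\leq\Lambda(\frac{\diam\Omega}{2})^{1-\beta}$ vanishes, so this constraint reduces to $\lambda\leq\Lambda$, matching the remaining half of the corollary's hypothesis. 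Putting the three observations together, the assumptions of \Cref{COR-EXISTENCE} coincide with those of \Cref{MAIN-THM-EXISTENCE} for $\beta=1$, and the conclusion -- existence, uniqueness, and uniform convergence of $\{\T_{\rho,p}^k u\}_k$ to $u_\rho$ for every norm-preserving continuous extension $u\in C(\overline\Omega)$ of $f$ -- transfers verbatim.

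There is no genuine obstacle here: the corollary is pure bookkeeping on the parameters. All the analytic content (the construction of continuous fixed points via iteration of $\T_{\rho,p}$ on $\mathcal{K}_f$ and, in particular, the $p$-independent barrier construction of \Cref{mainthmloc} alluded to in the introduction) is encapsulated in \Cref{MAIN-THM-EXISTENCE}, after which the $\beta=1$ specialization is automatic.
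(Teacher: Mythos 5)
Your proposal is correct and matches the paper's own (one-line) argument: the corollary is obtained from \Cref{MAIN-THM-EXISTENCE} simply by setting $\beta=1$, and your verification that the three constraints in \eqref{ctts} reduce exactly to $0<\lambda\leq\Lambda<\frac{n+2}{n+p}$ is precisely the bookkeeping the paper leaves implicit.
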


\

The next theorem is our second main result of the paper. It says
that, when considering a family of admissible radius functions going
to zero in an appropriate way, the corresponding solutions given by
\Cref{MAIN-THM-EXISTENCE} converge uniformly to the $p$-harmonic
solution of the Dirichlet Problem.

\begin{theorem}\label{MAIN-THM-CONVERGENCE}
Let $\Omega\subset\R^n$ be a domain satisfying the uniform exterior
cone condition and $p\in [2, \infty)$. Suppose that
$\{\rho_\varepsilon\}_{0<\varepsilon\leq 1}$ is a collection of
continuous admissible radius functions in $\Omega$ satisfying
\begin{equation*}
                \lambda\dist(x,\partial\Omega)^\beta
                \leq
                \frac{\rho_\varepsilon(x)}{\varepsilon}
                \leq
                \Lambda\dist(x,\partial\Omega)
\end{equation*}
for all $x\in\Omega$ and every $0<\varepsilon\leq 1$, where $\beta$,
$\lambda$ and $\Lambda$ are as in \eqref{ctts}. Given any continuous
boundary data $f\in C(\partial\Omega)$, let $u_{\varepsilon}$ be the
solution of
\begin{equation*}
                \begin{cases}
                \T_{\rho_\varepsilon,p} u_\varepsilon=u_\varepsilon
                & \mbox{ in } \Omega,
                \\
                u_\varepsilon=f
                & \text{ on } \partial\Omega.
                \end{cases}
\end{equation*}
Then $u_\varepsilon\to u_0$ uniformly in $\overline\Omega$, where
$u_0$ is the unique $p$-harmonic function in $\Omega$ solving
\begin{equation*}
                \begin{cases}
                \Delta_p u_0=0 & \mbox{ in } \Omega,
                \\
                u_0=f & \mbox{ on } \partial\Omega.
                \end{cases}
\end{equation*}
\end{theorem}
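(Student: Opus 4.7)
The plan is to use the half-relaxed limit method of Barles and Perthame. Define
\[
\overline u(x)=\limsup_{\substack{\varepsilon\to 0\\y\to x}}u_\varepsilon(y),
\qquad
\underline u(x)=\liminf_{\substack{\varepsilon\to 0\\y\to x}}u_\varepsilon(y),
\]
and aim to show that $\overline u$ is a viscosity subsolution and $\underline u$ a viscosity supersolution of $\Delta_p u=0$ in $\Omega$, with $\overline u=\underline u=f$ on $\partial\Omega$. The standard viscosity comparison principle for $p$-harmonic functions then forces $\overline u=\underline u=u_0$, from which uniform convergence of the whole family $\{u_\varepsilon\}$ follows by a routine compactness argument.

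The preparatory step is a uniform-in-$\varepsilon$ boundary continuity estimate: for every $\xi\in\partial\Omega$ and every $\delta>0$ there exists $r>0$ such that $|u_\varepsilon(x)-f(\xi)|\leq\delta$ whenever $x\in\overline\Omega\cap B(\xi,r)$ and $\varepsilon\in(0,1]$. This is where \Cref{mainthmloc} is essential: under the uniform exterior cone condition, its $p$-independent barriers furnish super- and subbarriers at every boundary point compatible simultaneously with all operators $\T_{\rho_\varepsilon,p}$; together with the continuity of $f$ and the Comparison Principle (\Cref{THM-COMPARISON-PPLE}), these sandwich $u_\varepsilon$ near $\xi$ with bounds independent of $\varepsilon$, pinning $\overline u=\underline u=f$ on $\partial\Omega$.

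With the boundary behavior controlled, the interior viscosity inequalities follow along familiar lines. Let $\varphi\in C^2$ with $\nabla\varphi(x_0)\neq 0$ touch $\overline u$ strictly from above at $x_0\in\Omega$. A semicontinuous envelope argument yields $\varepsilon_k\to 0$ and $x_k\to x_0$ such that $u_{\varepsilon_k}-\varphi$ has a local maximum at $x_k$, and since $\dist(x_0,\partial\Omega)>0$, the radius $\rho_{\varepsilon_k}(x_k)$ tends to zero. Affine invariance and monotonicity of $\T_{\rho_{\varepsilon_k},p}$ together with the equation $\T_{\rho_{\varepsilon_k},p}u_{\varepsilon_k}(x_k)=u_{\varepsilon_k}(x_k)$ give $\varphi(x_k)\leq \T_{\rho_{\varepsilon_k},p}\varphi(x_k)$, while a second-order Taylor expansion combined with \eqref{p-lapla} yields
\[
\T_{\rho_{\varepsilon_k},p}\varphi(x_k)-\varphi(x_k)
=\frac{\rho_{\varepsilon_k}(x_k)^2}{2(n+p)}\,\frac{\Delta_p\varphi(x_k)}{|\nabla\varphi(x_k)|^{p-2}}+o\pare{\rho_{\varepsilon_k}(x_k)^2}.
\]
Dividing by $\rho_{\varepsilon_k}(x_k)^2$ and passing to the limit forces $\Delta_p\varphi(x_0)\geq 0$; the degenerate case $\nabla\varphi(x_0)=0$ is handled by the standard Juutinen--Lindqvist--Manfredi convention in the definition of viscosity $p$-solutions, and the supersolution argument for $\underline u$ is symmetric.

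The main obstacle is the uniform-in-$\varepsilon$ boundary continuity estimate: the assumption permits $\rho_\varepsilon(x)$ to vanish as quickly as $\dist(x,\partial\Omega)^\beta$ with $\beta\geq 1$, so classical harmonic or $p$-harmonic barriers do not transfer directly to this averaging context. The upper bound $\Lambda<1-\pare{(p-2)/(n+p)}^{1/\beta}$ in \eqref{ctts} is precisely the quantitative condition that renders the $p$-independent barriers of \Cref{mainthmloc} admissible for every $\T_{\rho_\varepsilon,p}$ at once; once this ingredient is in place, the Barles--Perthame machinery and the viscosity comparison principle close the argument.
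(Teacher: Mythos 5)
Your proposal follows essentially the same route as the paper: define the upper and lower half-relaxed limits $\overline u$, $\underline u$, show they are viscosity sub- and super-solutions of $\Delta_p u=0$ via the asymptotic expansion of $\T_{\rho_\varepsilon,p}$ at a touching test function, control the boundary values uniformly in $\varepsilon$ with the $p$-independent barriers from \Cref{mainthmloc} and the comparison principle \Cref{THM-COMPARISON-PPLE}, and conclude $\overline u\leq\underline u$ by the viscosity comparison principle for $p$-harmonic functions. The only cosmetic difference is that the paper normalizes by $\varepsilon^2$ rather than by $\rho_{\varepsilon_k}(x_k)^2$ and keeps the ratio $(\rho_\varepsilon/\varepsilon)^2$ visible, which makes the role of the two-sided bound in \eqref{ctts} slightly more explicit, but the argument is the same.
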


\

The fundamental tool for the proofs of \Cref{MAIN-THM-EXISTENCE} and
\Cref{MAIN-THM-CONVERGENCE} is provided by \Cref{mainthmloc} below.

\begin{definition}
Let $\Omega\subset\R^n$ be a domain and $\xi\in\partial\Omega$. We
say that a function $w_\xi\in C(\overline\Omega)$ is a
\emph{$\T_{\rho,p}$-barrier} at $\xi$ if $w_\xi>0$ in
$\overline\Omega\setminus\{\xi\}$, $w_\xi(\xi)=0$,  and
$w_\xi\geq\T_{\rho,p} w_\xi$ in $\Omega$. In this case, we say that
$\xi$ is a \emph{$\T_{\rho,p}$-regular} point. Moreover, a bounded
domain $\Omega\subset\R^n$ is \emph{$\T_{\rho,p}$-regular} if every
point on $\partial\Omega$ is $\T_{\rho,p}$-regular.
\end{definition}

\begin{theorem}\label{mainthmloc}
Let $\Omega \subset \R^n$ be a bounded domain satisfying the uniform
exterior cone condition with constants $\alpha \in (0, \frac{\pi}{2}
)$ and $r>0$ as in \Cref{DEF-ECC}. Choose $\gamma$ such that
\begin{equation}\label{gamma}
0 < \gamma < \frac{8(\sin \alpha )^{n-2}}{n(13 \pi^2 + 4\pi )}.
\end{equation}
Then for each $\xi \in \partial \Omega$ there exists a function
$w_{\xi} \in C(\overline{\Omega})$ such that $w_{\xi}(\xi ) = 0$,
$w_{\xi} >0 $ in $\overline{\Omega} \setminus \{ \xi \}$,
\begin{equation}\label{MS-supersolution}
    w_{\xi}(x)
    \geq
    \dashint_{B(x,\varrho)}w_{\xi}(y)\ dy
    \quad , \qquad
    w_{\xi}(x)
    \geq
    \frac{1}{2}\sup_{B(x,\varrho)}w_{\xi}+\frac{1}{2}\inf_{B(x,\varrho)}w_{\xi}
\end{equation}
for every ball $B(x,\varrho)\subset\Omega$ and
\begin{equation}\label{lower-upper-control}
\L (|x-\xi|) \leq w_{\xi}(x) \leq \gamma^{-2}|x-\xi|^{\gamma}
\end{equation}
for every $x\in \overline{\Omega}$, where $\L (t) :=
\alpha^{2-n}\min \{t,r\}^{\gamma} $. In particular, for each $ p \in
[2, \infty]$ and any admissible radius function $\rho$ in $\Omega$,
$w_{\xi}$ is, simultaneously, a $\T_{\rho ,p}$-barrier and a barrier
for the $p$-laplacian  at $\xi$ in $\Omega$.
\end{theorem}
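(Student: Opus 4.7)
The approach is to construct, for each $\xi \in \partial\Omega$, an explicit $p$-independent barrier $w_\xi$ by direct formula. After translating and rotating, I reduce to the case $\xi = 0$ with the exterior cone $K_{\alpha,r}$ having axis $-e_1$, and I look for a candidate of product form
\[
w_\xi(x) = \Phi(|x|)\,\Psi(\theta(x)),
\]
where $\theta(x)$ is the angle between $x$ and $+e_1$, $\Phi(t) = \min\{t,r\}^{\gamma}$ is a truncated power profile, and $\Psi$ is an explicit positive function on the angular interval $[0,\pi-\alpha]$ (the complement of the cone's angular range), suitably normalized and with controlled first and second derivatives. The key geometric input is that every ball $B(x,\varrho) \subset \Omega$ is disjoint from the exterior cone, so $\theta(y)$ stays bounded away from $\pi$ for $y$ in any such ball; this lets me design $\Psi$ to vanish at $\pi - \alpha$ without harming the verification on balls inside $\Omega$.

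The verification of \eqref{MS-supersolution} splits naturally into two pieces. For the midrange inequality, the sup and inf of $w_\xi$ on $B(x,\varrho)$ are essentially attained along the radial diameter through $x$, so the estimate reduces to the concavity of $\Phi(t) = t^\gamma$ (valid for $\gamma \leq 1$) combined with a small-oscillation bound on $\Psi$ over the ball. For the mean-value inequality, I would Taylor-expand $w_\xi$ around $x$ and integrate over $B(x,\varrho)$: odd terms vanish by symmetry, and the remaining contribution is the usual combination of the radial Laplacian of $\Phi$ and the spherical Laplacian of $\Psi$ at second order, corrected by integral remainders. Picking $\gamma$ small enough relative to explicit bounds on $\Psi$ and $\Psi''$ makes the resulting quantity negative, which is precisely the content of \eqref{gamma}: the factor $(\sin\alpha)^{n-2}$ encodes the $(n-2)$-dimensional measure of the cap's boundary at unit distance, and $13\pi^2 + 4\pi$ absorbs the worst-case derivative estimates for $\Psi$ on $[0,\pi-\alpha]$.

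The growth bounds \eqref{lower-upper-control} follow from the explicit formula. The upper bound $w_\xi(x) \leq \gamma^{-2}|x|^{\gamma}$ is immediate once $\Psi$ is normalized so that $\Psi \leq \gamma^{-2}$ on its domain; the lower bound $\alpha^{2-n}\min\{|x|,r\}^{\gamma} \leq w_\xi(x)$ follows from a uniform positive lower bound of $\Psi$ over the bulk of the cap $[0,\pi-\alpha]$, with the factor $\alpha^{2-n}$ reflecting the spherical-cap measure. Continuity on $\overline\Omega$ and strict positivity on $\overline\Omega \setminus\{\xi\}$ are built into the construction. Finally, since $w_\xi$ is simultaneously an $\M_\rho$-supersolution and an $\S_\rho$-supersolution on every ball contained in $\Omega$, it is a $\T_{\rho,p}$-supersolution for every admissible radius $\rho$ and every $p \in [2,\infty)$ by convex combination; the same pair of pointwise inequalities together with \eqref{p-lapla} yields the viscosity $p$-superharmonicity needed to serve as a $p$-Laplacian barrier.

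The main technical obstacle is the simultaneous verification of both halves of \eqref{MS-supersolution} with the \emph{same} function $w_\xi$. Unlike the infinitesimal PDE framework, the midrange inequality is a nonlocal sup--inf constraint that cannot be reduced to a pointwise differential computation, so the oscillation of $w_\xi$ over the full ball must be controlled and balanced against the concavity of $\Phi$. This balancing is what forces the smallness of $\gamma$ quantified in \eqref{gamma} and dictates the precise form of the angular factor $\Psi$; getting this balance right, while keeping $\Psi$ and the resulting constants fully explicit, is the delicate heart of the argument.
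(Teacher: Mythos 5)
Your proposal shares the high-level design with the paper (a product ansatz in polar coordinates centered at $\xi$, then reduce to a cone complement, then verify the $\M_\rho$ and $\S_\rho$ supersolution inequalities separately), but several specific design choices and steps are wrong and the argument would not close as written.

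\emph{Truncation.} You truncate the radial factor, taking $\Phi(t)=\min\{t,r\}^\gamma$. On $\{|x|>r\}$ the candidate barrier then equals $r^\gamma\Psi(\theta)$, and there is no reason for a function of $\theta$ alone to be superharmonic or to satisfy the sup--inf inequality: its Laplacian involves $\Psi''(\theta)/R^2+(n-2)\cot\theta\,\Psi'(\theta)/R^2$, whose sign is not controlled. The paper instead constructs $U(x)=|x|^\gamma\bigl(A-\phi(\theta)\bigr)$ on the complement of the \emph{whole} cone, proves superharmonicity and the sup--inf inequality there, and then passes to the truncated cone by taking $w=\min\{U,m\}$ with $m$ the (positive) infimum of $U$ outside $B(0,r)$. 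Taking the minimum with a constant trivially preserves both supersolution properties, and this is what makes the truncation harmless.

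\emph{The requirement that $\Psi$ vanish at $\pi-\alpha$.} This is the wrong normalization and would destroy \eqref{lower-upper-control}: the lower bound $w_\xi(x)\geq \alpha^{2-n}\min\{|x-\xi|,r\}^\gamma$ forces $\Psi$ to be bounded away from zero on all of $[0,\pi-\alpha]$. The paper's angular factor $A-\phi(\theta)$, with $\phi$ solving $\phi''+(n-2)\phi'\cot\theta=1$, $\phi(0)=\phi'(0)=0$, is bounded below by $\frac{3\pi^2}{2(\sin\alpha)^{n-2}}>0$ on that interval and does not vanish at $\pi-\alpha$.

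\emph{The mean-value inequality.} Taylor-expanding and integrating gives only an asymptotic statement as $\varrho\to 0$, with remainder terms whose sign you cannot control. The inequality $w_\xi(x)\geq\dashint_{B(x,\varrho)}w_\xi$ must hold exactly for \emph{every} ball $B(x,\varrho)\subset\Omega$. The correct route, and the one the paper takes, is to prove pointwise superharmonicity $\Delta U\leq 0$ (here the specific ODE for $\phi$ is essential, as it collapses $\Delta U$ to $-R^{\gamma-2}\bigl[1-\gamma(\gamma+n-2)(A-\phi)\bigr]$), and then invoke the classical super-mean-value property of superharmonic functions over all inscribed balls.

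\emph{The sup--inf inequality.} It is not true that the sup and inf are attained along the radial diameter through $x$, and the inequality does not reduce to concavity of $t\mapsto t^\gamma$ plus a small-oscillation bound on $\Psi$. Since $A-\phi$ decreases in $\theta$, the supremum of $U$ over a ball occurs at a point with simultaneously larger radius and smaller angle than the center, forcing an optimization over angular offsets $t$. The paper pairs the point at radius $R_+(t)$ and angle $\theta_0-t$ against the one at radius $R_-(t)$ and angle $\theta_0+t$, and proves the resulting inequality \eqref{key-ineq} for all admissible $t$ using the \emph{convexity} of $\phi$, the explicit bound on $\phi'$, and the elementary estimates in Lemmas \ref{aux-lemma} and \ref{<4}. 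This is the genuine technical core and cannot be replaced by the radial-diameter heuristic.
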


\

\subsection{Furher remarks}

It is worth to recall that the Dirichlet Problem \eqref{dirTrho} for
$p=\infty$ was studied by Le Gruyer and Archer in \cite{LEG-ARC} in
the context of metric spaces. There, functions satisfying $\S_\rho
u=u$ were originally called \emph{harmonious} and studied in
connection to extension problems of continuous functions in metric
spaces. It follows, as a particular case of results in
\cite{LEG-ARC}, that if $\Omega \subset \R^n $ is a bounded and
convex domain and $\rho$ is $1$-Lipschitz then the Dirichlet Problem
\begin{equation}\label{dirS}
\begin{cases} \S_{\rho}u   & =  \, u \, \, \, \,  \text{in} \, \, \, \Omega, \\
u  & =  \, f \, \, \, \,  \text{on} \, \, \,
\partial \Omega,
\end{cases}
\end{equation}
has a unique solution for each $f\in C(\overline{\Omega})$. One of
the most important features of the operator $\S_\rho$ (with
$1$-Lipschitz $\rho$) is that it preserves the concave modulus of
continuity: if $\widehat{\omega}_u$ is the lowest concave modulus of
continuity of $u$ in $\Omega$, then $\widehat{\omega}_{\S_\rho
u}\leq\widehat{\omega}_u$. This invariance property allows the use
of Schauder's fixed point theorem to prove existence in
\eqref{dirS}. Unfortunately, the operators $\T_{\rho,p}$ do not
preserve in general  the modulus of continuity, thus Schauder's
Theorem is no longer available and different strategies are required
to obtain fixed points.

\

The existence part in \Cref{MAIN-THM-EXISTENCE} is obtained from the
equicontinuity and subsequent uniform convergence in
$\overline{\Omega}$ of the iterates $\{ \T_{\rho,p}^k u \}_k$, for
$u\in C (\overline{\Omega})$. When $p = 2$, the fact that the
sequence $\{ \M_{\rho}^k u \}_k $ converges uniformly in
$\overline{\Omega}$ to the solution of the (harmonic) Dirichlet
Problem in $\Omega$ with boundary data $f = u|_{\partial \Omega}$
was already observed by Lebesgue, in the case that $\Omega$ is
regular and $\rho (x) = \dist(x, \partial \Omega )$ (\cite{Le}, see
also \cite{C} for a more general approach in this direction). A
significative difference between Lebesgue's setting and the methods
of this paper is that in Lebesgue's note existence is taken for
granted and the convergence of the iterates is obtained as a
consequence, while we actually use the convergence of the iterates
to prove existence. As for equicontinuity, it is worth mentioning
that boundary equicontinuity turns out to be a much more delicate
matter than interior equicontinuity. In \cite{ARR-LLO-16-1},
boundary equicontinuity was established under the assumption that
$\Omega$ is strictly convex and $\rho$ is $1$-Lipschitz. Our
approach here is based on the construction of explicit barriers for
$\T_{\rho,p}$, having the additional advantage that they work for
domains satisfying the uniform exterior cone condition. We would
like to point out that, since the operators $\T_{\rho,p}$ are not
local, some steps in Perron's method (like Poisson's modification)
do not work in our setting and we need \emph{ad hoc} arguments to
prove existence. Our approach gives, in particular, a more
constructive proof of the existence of solution to the Dirichlet
problem for the $p$-laplacian in domains satisfying a uniform
exterior cone condition.

\

The rest of the paper is organized as follows: in \cref{barr} we
show the existence of $\T_{\rho,p}$-barriers for domains satisfying
the uniform exterior cone condition (\Cref{mainthmloc}). Then in
\cref{sec.existence,sec.convergence} we use these barriers to prove
existence of fixed points of $\T_{\rho,p}$
(\Cref{MAIN-THM-EXISTENCE}) and their convergence to $p$-harmonic
functions (\Cref{MAIN-THM-CONVERGENCE}), respectively. For the sake
of convenience and, whenever the role of $p\in[2,\infty)$ causes no
confusion, we will write $\T_\rho$ instead of $\T_{\rho,p}$ in what
follows.

\

\noindent\textbf{Acknowledgements.} We wish to thank F. del Teso,
J.J. Manfredi and M. Parviainen for bringing to our attention their
preprint \cite{DEL-MAN-PAR}, which motivated part of  this work.

\section{Barriers for $\T_\rho$} \label{barr}

Our goal is to construct a $\T_\rho$-barrier
at each boundary point
and consequently, to show that each point on the boundary is
$\T_\rho$-regular. Fix $\xi\in\partial\Omega$. Recalling the
definition of the uniform exterior cone condition, there exist
constants $\alpha\in(0,\frac{\pi}{2})$ and $r>0$ and a rotation
$R_\xi\in SO(n)$ such that
\begin{equation*}
    \xi+R_\xi(K_{\alpha,r})
    \subset
    \R^n\setminus\Omega,
\end{equation*}
where
\begin{equation}\label{Kr}
    K_{\alpha,r}
    =
    \set{x\in\R^n}{x_1\leq-|x|\cos\alpha \ \text{ and } \ |x|\leq r}.
\end{equation}
We observe that after a translation and a rotation, we can assume
that $\xi=0$ and $R_\xi=\mathrm{Id}$, in which case we define a
bigger domain $\Omega_{\alpha,r}=\R^n\setminus K_{\alpha,r}$ so that
$\Omega\subset\Omega_{\alpha,r}$. Then our aim is to construct a
function $w$ in $\Omega_{\alpha,r}$ such that its restriction to
$\Omega$, $w\big|_\Omega$, verifies $w \geq \T_\rho w $ for every
admissible radius function $\rho$ in $\Omega$.

We split the construction of such function in two steps. First, we
construct the barrier at $0$ for the complement of an unbounded cone
along the negative $x_1$-axis. Second, we adapt the argument to work for the
complement of a truncated cone.

\subsection{Barrier for the complement of a whole cone}

Let $\alpha\in(0,\frac{\pi}{2})$ and define
\begin{equation*}
    \Omega_\alpha
    :\,=
    \set{x\in\R^n}{x_1>-|x|\cos\alpha}.
\end{equation*}
We will use \emph{polar} coordinates with respect to the $x_1$-axis, that is, we assign a pair $(R,\theta)$ to each $x\in\R^n$,
where $R=|x|$ and $\theta=\arccos\big(\frac{x_1}{|x|}\big)\in[0,\pi)$ is the angle between $x$ and the positive $x_1$-axis. Then,
\begin{equation*}
    \Omega_\alpha
    =
    \set{x\in\R^n}{0\leq\theta<\pi-\alpha}.
\end{equation*}

Before stating  the main result of this section, we define an auxiliary function
$\phi:(-\pi,\pi)\to[0,\infty)$ as the solution of the differential
equation
\begin{equation}\label{edo}
    \begin{cases}
        \phi''(\theta)+(n-2)\phi'(\theta)\cot\theta
        =
        1,
        \\
        \phi(0)=\phi'(0)=0,
    \end{cases}
\end{equation}
which has the integral form
\begin{equation}\label{defphi}
    \phi(\theta)
    =
    \int_0^{|\theta|}\int_0^t\bigg(\frac{\sin s}{\sin t}\bigg)^{n-2}\ ds\ dt
\end{equation}
for every $\theta\in(-\pi,\pi)$ (see
\cite[Lemma 2.4]{HAY-KEN}). We review some of the properties of the
auxiliary function $\phi$ in the following lemma.

\begin{lemma}\label{phi}
The function $\phi:(-\pi,\pi)\to[0,\infty)$ defined in \eqref{defphi} satisfies:
\begin{enumerate}[label=\roman*)]
\item $\phi\in C^2(-\pi,\pi)$.
\item $\phi$ is increasing in $(0,\pi)$ and convex in $(-\pi,\pi)$.
\item For every $|\theta|\leq\pi-\alpha$,
\begin{equation}\label{bounds-phi}
    0
    \leq
    \phi(\theta)
    \leq
    \frac{\pi^2}{8}+\frac{\pi}{2(\sin\alpha)^{n-2}}
    \qquad \text{ and } \qquad
    \phi'(\theta)
    \leq
    \frac{\pi}{(\sin\alpha)^{n-2}}.
\end{equation}
\end{enumerate}
\end{lemma}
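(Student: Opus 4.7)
By construction $\phi$ is even, so it suffices to verify everything on $[0,\pi)$ and invoke reflection for negative $\theta$. Differentiating the integral representation on $(0,\pi)$ yields
\begin{equation*}
\phi'(\theta) = \frac{\psi(\theta)}{(\sin\theta)^{n-2}}, \qquad \psi(\theta) := \int_0^\theta(\sin s)^{n-2}\,ds,
\end{equation*}
and differentiating once more recovers the ODE \eqref{edo}. For part (i), the only delicate point is regularity at $\theta = 0$: the Taylor expansion $(\sin s)^{n-2} = s^{n-2}(1+O(s^2))$ gives $\psi(\theta)\sim\theta^{n-1}/(n-1)$, hence $\phi'(\theta)\sim\theta/(n-1)$ and $\phi''(\theta)\to 1/(n-1)$ as $\theta\to 0^+$; combined with evenness this yields $\phi\in C^2(-\pi,\pi)$.

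For part (ii), monotonicity on $(0,\pi)$ is immediate from $\phi'\geq 0$. For convexity the plan is to establish $\phi''\geq 0$ on $[0,\pi)$. On $(\pi/2,\pi)$ this is free from the ODE, since $\phi'\geq 0$ and $\cot\theta\leq 0$ force $\phi''(\theta)=1+(n-2)\phi'(\theta)|\cot\theta|\geq 1$. On $(0,\pi/2)$ the sign of $-(n-2)\phi'(\theta)\cot\theta$ is unfavorable and a genuine estimate is needed. My preferred route is the change of variables $s = \theta u$, giving $\phi'(\theta)=\theta\int_0^1\bigl(\sin(\theta u)/\sin\theta\bigr)^{n-2}\,du$; for each fixed $u\in(0,1)$, the inequality $\tan(\theta u)\leq u\tan\theta$ on $(0,\pi/2)$ (a consequence of the convexity of $\tan$) forces $\sin(\theta u)/\sin\theta$ to be nondecreasing in $\theta$, so $\phi'$ is a product of two nondecreasing nonnegative factors and hence nondecreasing, i.e.\ $\phi''\geq 0$.

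For part (iii), nonnegativity is clear. To bound $\phi'$, convexity implies $\phi'(\theta)\leq\phi'(\pi-\alpha)=\psi(\pi-\alpha)/(\sin\alpha)^{n-2}$ on $[0,\pi-\alpha]$, and the crude estimate $\psi(\pi-\alpha)\leq\int_0^\pi(\sin s)^{n-2}\,ds\leq\pi$ (using $(\sin s)^{n-2}\leq 1$) closes this half. For $\phi$ itself I would split the interval at $\theta=\pi/2$: on $[0,\pi/2]$ the ODE combined with $\phi'(t)\cot t\geq 0$ forces $\phi''(t)\leq 1$, hence $\phi'(t)\leq t$ and $\phi(\pi/2)\leq\pi^2/8$; on $[\pi/2,\pi-\alpha]$ I would reflect via $t=\pi-u$, exploit the symmetry identity $\psi(\pi-u)=\psi(\pi)-\psi(u)$ coming from $\sin(\pi-s)=\sin s$ to express $\phi'(\pi-u)$ as a multiple of $(\sin u)^{-(n-2)}$ minus the nonnegative term $\phi'(u)$, discard the latter, and conclude using $(\sin u)^{-(n-2)}\leq(\sin\alpha)^{-(n-2)}$ on $[\alpha,\pi/2]$ together with the length estimate $\pi/2-\alpha\leq\pi/2$.

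The main technical obstacle is the convexity on $(0,\pi/2)$, because the ODE presents $\phi''$ there as a signed difference rather than a manifestly nonnegative sum; the convexity-of-$\tan$ trick (or the equivalent primitive-comparison showing $(n-2)\psi(\theta)\leq(\sin\theta)^{n-1}/\cos\theta$) is the natural way to unlock it. Once that is in hand, the remainder is bookkeeping with the Wallis-type integral $\psi$ and careful use of the symmetry $\sin(\pi-s)=\sin s$ near the endpoint $\theta=\pi$.
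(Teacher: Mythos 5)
Your treatment of (i), (ii), and the $\phi'$ estimate in (iii) is sound and broadly parallel to the paper's, with one genuine departure worth noting. For convexity on $(0,\pi/2)$ the paper evaluates $\phi''$ from the ODE, isolates the numerator $\psi(\theta)=(\sin\theta)^{n-1}-(n-2)\cos\theta\int_0^\theta(\sin t)^{n-2}\,dt$, and verifies $\psi(0)=0$ and $\psi'\geq 0$; you instead factor $\phi'(\theta)=\theta\int_0^1\bigl(\sin(\theta u)/\sin\theta\bigr)^{n-2}\,du$ and show both factors are nondecreasing via the convexity of $\tan$ on $[0,\pi/2)$. Both are correct; your route avoids computing a second derivative and makes the monotonicity of $\phi'$ manifest. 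For the $\phi'$ bound, your one-line estimate $\int_0^{\pi-\alpha}(\sin s)^{n-2}\,ds\leq\pi$, combined with monotonicity of $\phi'$, is equivalent to but tidier than the paper's two-case bound \eqref{phi-d1}.

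The substantive issue is the $\phi$ bound. Carrying your chain through with $\psi(\pi)=\int_0^\pi(\sin s)^{n-2}\,ds\leq\pi$ gives
\begin{equation*}
\phi(\pi-\alpha)
\leq
\frac{\pi^2}{8}+\int_{\alpha}^{\pi/2}\frac{\psi(\pi)}{(\sin u)^{n-2}}\,du
\leq
\frac{\pi^2}{8}+\frac{\pi}{2}\cdot\frac{\psi(\pi)}{(\sin\alpha)^{n-2}}
\leq
\frac{\pi^2}{8}+\frac{\pi^2}{2(\sin\alpha)^{n-2}},
\end{equation*}
which has $\pi^2$, not $\pi$, in the numerator of the second term and therefore does not yield \eqref{bounds-phi} as stated. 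However, the stated constant appears to be an error in the paper rather than a loss on your part: for $n=2$ one has $\phi(\theta)=\theta^2/2$, and as $\alpha\to 0^+$, $\phi(\pi-\alpha)\to\pi^2/2\approx 4.93$, which exceeds the claimed $\pi^2/8+\pi/2\approx 2.80$. In the paper's own derivation the displayed step
\begin{equation*}
\int_{\pi/2}^\theta\frac{t}{(\sin t)^{n-2}}\,dt
\leq
\Bigl(\theta-\frac{\pi}{2}\Bigr)\max_{\pi/2\leq t\leq\theta}\Bigl\{\frac{1}{(\sin t)^{n-2}}\Bigr\}
\end{equation*}
silently drops the numerator factor $t\geq\pi/2>1$; repairing it yields exactly your $\pi^2/8+\pi^2/(2(\sin\alpha)^{n-2})$. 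You should therefore flag the discrepancy explicitly rather than leave the final bound implicit, and note that fixing the constant here propagates to \eqref{Ak} and to the admissible range for $\gamma$ in \eqref{gamma}, both of which are calibrated to the incorrect figure.
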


\begin{proof}
It is easy to check that $\phi\in C^2(-\pi,\pi)$. Hereafter, we restrict the analysis to the interval $[0,\pi)$. By differentiation of \eqref{defphi},
\begin{equation*}
    \phi'(\theta)
    =
    \frac{1}{(\sin\theta)^{n-2}}\int_0^\theta(\sin t)^{n-2}\ dt
    \geq
    0,
\end{equation*}
so $\phi$ is increasing in $(0,\pi)$. Next, since $\phi$ satisfies \eqref{edo}, then
\begin{equation*}
    \phi''(\theta)
    =
    \frac{(\sin\theta)^{n-1}-(n-2)\cos\theta\displaystyle\int_0^\theta(\sin t)^{n-2}\ dt}{(\sin\theta)^{n-1}}
\end{equation*}
for $0<\theta<\pi$. Observe that if $\frac{\pi}{2}\leq\theta<\pi$, then $\cos\theta\leq 0$, so $\phi''\geq 0$ in $[\frac{\pi}{2},\pi)$. For $0\leq\theta\leq\frac{\pi}{2}$ define
\begin{equation*}
    \psi(\theta)
    =
    (\sin\theta)^{n-1}-(n-2)\cos\theta\int_0^\theta(\sin t)^{n-2}\ dt
\end{equation*}
and observe that $\psi(0)=0$ and
\begin{equation*}
    \psi'(\theta)
    =
    \cos\theta(\sin\theta)^{n-2} + (n-2)\sin\theta\int_0^\theta(\sin t)^{n-2}\ dt
    \geq
    0
\end{equation*}
for $0\leq\theta\leq\frac{\pi}{2}$. Therefore $\phi$ is convex in $(0,\pi)$.

To show \eqref{bounds-phi} note first that
\begin{equation*}
    \int_0^\theta(\sin t)^{n-2}\ dt
    \leq
    \theta\max_{0\leq t\leq\theta}\{(\sin t)^{n-2}\},
\end{equation*}
for each $0<\theta<\pi$, so
\begin{equation}\label{phi-d1}
    \phi'(\theta)
    \leq
    \begin{cases}
        \theta & \displaystyle \text{ if } 0\leq\theta\leq\frac{\pi}{2},
        \\
        \displaystyle\frac{\theta}{(\sin\theta)^{n-2}} & \text{ if } \displaystyle\frac{\pi}{2}\leq\theta<\pi.
\end{cases}
\end{equation}
Since $\phi$ is convex in $(0,\pi)$, then $\phi'$ is increasing in $(0,\pi)$, and recalling that $\alpha\in(0,\frac{\pi}{2})$ we obtain that
\begin{equation*}
    \phi'(\theta)
    \leq
    \phi'(\pi-\alpha)
    \leq
    \frac{\pi}{(\sin\alpha)^{n-2}}
\end{equation*}
for every $0\leq\theta\leq\pi-\alpha$, which is the second inequality in \eqref{bounds-phi}.

On the other hand, for $\frac{\pi}{2}\leq\theta<\pi$ we get
\begin{equation*}
    \int_{\frac{\pi}{2}}^\theta\frac{t}{(\sin t)^{n-2}}\ dt
    \leq
    \Big(\theta-\frac{\pi}{2}\Big)\max_{\frac{\pi}{2}\leq t\leq\theta}\bigg\{\frac{1}{(\sin t)^{n-2}}\bigg\}
    =
    \frac{\theta-\frac{\pi}{2}}{(\sin\theta)^{n-2}},
\end{equation*}
Integrating \eqref{phi-d1} we obtain
\begin{equation*}
    0
    \leq
    \phi(\theta)
    \leq
        \displaystyle \frac{\pi^2}{8}+\frac{\theta-\frac{\pi}{2}}{(\sin\theta)^{n-2}}
\end{equation*}
for every $\frac{\pi}{2}\leq\theta<\pi$. In particular, since $\phi$ is increasing,
\begin{equation*}
    \phi(\theta)
    \leq
    \phi(\pi-\alpha)
    \leq
    \frac{\pi^2}{8}+\frac{\frac{\pi}{2}-\alpha}{(\sin(\pi-\alpha))^{n-2}}
    \leq
    \frac{\pi^2}{8}+\frac{\pi}{2(\sin\alpha)^{n-2}},
\end{equation*}
and the first in equality \eqref{bounds-phi} follows.
\end{proof}

\begin{lemma}\label{mainthm}
For $\alpha\in(0,\frac{\pi}{2})$ let
\begin{equation*}
    \Omega_\alpha
    :\,=
    \set{x\in\R^n}{x_1>-|x|\cos\alpha}
\end{equation*}
and $U:\overline\Omega_\alpha\to\R$ be the function defined as
\begin{equation}\label{maindef}
    \begin{cases}
        U(x)
        =
        |x|^\gamma\big(A-\phi(\theta)\big),
        \\
        \theta=\arccos\big(\frac{x_1}{|x|}\big),
    \end{cases}
\end{equation}
where $\phi:(-\pi,\pi)\to[0,\infty)$ is the auxiliary function
defined in \eqref{defphi} and $A>0$, $\gamma\in(0,\frac{1}{2}]$ are
constants satisfying
\begin{equation}\label{Ak}
    \frac{\pi^2}{8} + \frac{3\pi^2+\pi}{2(\sin\alpha )^{n-2}}
    \leq
    A
    \leq
    \frac{1}{\gamma(\gamma+n-2)}.
\end{equation}
Then $U\in C^2(\Omega_\alpha)\cap C(\overline\Omega_\alpha)$, $U(0)
= 0$, $U >0$ in $\overline\Omega_{\alpha} \setminus \{ 0 \}$,
\begin{equation}\label{U-supersolution}
    U(x)
    \geq
    \dashint_{B(x,\varrho)}U(y)\ dy
    \quad , \qquad
    U(x)
    \geq
    \frac{1}{2}\sup_{B(x,\varrho)}U +\frac{1}{2}\inf_{B(x,\varrho)}U
\end{equation}
for every ball $B(x,\varrho)\subset\Omega_{\alpha}$ and
\begin{equation}\label{u-bounds}
    \alpha^{2-n}\,|x|^\gamma
    \leq
    U(x)
    \leq
    \gamma^{-2}|x|^\gamma
\end{equation}
for every $x\in\overline\Omega_\alpha$. In particular, for each
$p\in [2, \infty ]$ and any admissible radius function $\rho$ in
$\Omega_{\alpha}$, $U$ is, simultaneously, a  $\T_{\rho ,
p}$-barrier  and a barrier for the $p$-laplacian at $0$ in
$\Omega_{\alpha}$.
\end{lemma}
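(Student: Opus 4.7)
The plan is to verify the pointwise PDE inequalities $\Delta U\leq 0$ and $\Delta_\infty U\leq 0$ on $\Omega_\alpha$ and to upgrade them to the two mean value supersolution inequalities in \eqref{U-supersolution} by standard potential theoretic results.

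The preliminaries are direct. Since $\phi\in C^2(-\pi,\pi)$ by \Cref{phi}(i) and $(R,\theta)$ are smooth in $x$ away from the origin, $U\in C^2(\Omega_\alpha)$, while continuity at the boundary point $0$ follows from $|U(x)|\leq A|x|^\gamma$. For \eqref{u-bounds} I sandwich $A-\phi(\theta)$: the upper estimate on $\phi$ from \Cref{phi}(iii) together with the lower window of \eqref{Ak} gives $A-\phi(\theta)\geq 3\pi^2/(2(\sin\alpha)^{n-2})\geq\alpha^{2-n}$ (using $\sin\alpha\leq\alpha$ and $3\pi^2/2>1$), while $A-\phi\leq A\leq 1/(\gamma(\gamma+n-2))\leq\gamma^{-2}$, where the last bound uses $n\geq 2$. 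In particular $U>0$ on $\overline\Omega_\alpha\setminus\{0\}$. Writing $h=A-\phi$, the axisymmetric Laplacian in the $(R,\theta)$-frame reads
\begin{equation*}
\Delta U=R^{\gamma-2}\bigl[\gamma(\gamma+n-2)\,h(\theta)+h''(\theta)+(n-2)\cot\theta\,h'(\theta)\bigr],
\end{equation*}
and the ODE \eqref{edo} for $\phi$ collapses the angular bracket to the constant $-1$, leaving $\Delta U=R^{\gamma-2}[\gamma(\gamma+n-2)h(\theta)-1]\leq 0$ by the upper restriction on $A$ in \eqref{Ak}. Being $C^2$ and superharmonic, $U$ satisfies the first inequality in \eqref{U-supersolution} on every $B(x,\varrho)\subset\Omega_\alpha$ by the classical mean value supersolution theorem.

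The harder step is the sup-inf inequality. Axial symmetry around the $x_1$-axis gives $\nabla U=\gamma R^{\gamma-1}h\,\hat R+R^{\gamma-1}h'\,\hat\Theta$ without further angular components, and a direct Hessian computation in the orthonormal polar frame produces
\begin{equation*}
\Delta_\infty U=R^{3\gamma-4}\bigl[\gamma^3(\gamma-1)\,h^3+\gamma(2\gamma-1)\,h\,(h')^2+(h')^2\,h''\bigr].
\end{equation*}
The restriction $\gamma\in(0,\tfrac{1}{2}]$ is essential precisely here: $\gamma-1<0$ and $2\gamma-1\leq 0$ make the first two summands nonpositive, while $h''=-\phi''\leq 0$ by the convexity of $\phi$ (\Cref{phi}(ii)) handles the third. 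Hence $\Delta_\infty U\leq 0$. Since $\gamma R^{\gamma-1}h>0$ throughout $\Omega_\alpha$, $\nabla U$ never vanishes and $U$ is a classical, hence viscosity, supersolution of the infinity Laplacian. By the standard equivalence with comparison with cones from below and the induced monotonicity of the upper/lower slope functions (both of which tend to $|\nabla U(x)|$ as the radius shrinks to zero), for every $B(x,\varrho)\subset\Omega_\alpha$ one obtains
\begin{equation*}
\max_{\overline{B(x,\varrho)}}U-U(x)\leq\varrho\,|\nabla U(x)|\leq U(x)-\min_{\overline{B(x,\varrho)}}U,
\end{equation*}
which rearranges into the sup-inf supersolution inequality in \eqref{U-supersolution}.

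The conclusions then follow by linearity: the convex combination \eqref{Trho} of the two established inequalities gives $U\geq\T_{\rho,p}U$ on $\Omega_\alpha$ for every admissible $\rho$ and every $p\in[2,\infty]$, so $U$ is a $\T_{\rho,p}$-barrier at $0$; and substituting $\Delta U,\Delta_\infty U\leq 0$ into \eqref{p-lapla} with $p\geq 2$ and $\nabla U\neq 0$ yields $\Delta_p U\leq 0$, so $U$ is simultaneously a classical barrier for $\Delta_p$. The main obstacle is the $\Delta_\infty$ step: $U$ is not globally concave (the Hessian has positive tangential eigenvalues at points close to the $x_1$-axis), so naive concavity or midpoint arguments are unavailable, and one really has to carry out the polar $\Delta_\infty$ computation with its sign cancellations controlled by $\gamma\leq\tfrac{1}{2}$, and then invoke the cone comparison machinery to convert the pointwise PDE inequality into a sup-inf bound valid on balls of every radius.
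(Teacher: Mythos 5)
Your preliminaries, the bounds in \eqref{u-bounds}, and the superharmonicity step (reducing $\Delta U$ via the ODE \eqref{edo} to $R^{\gamma-2}[\gamma(\gamma+n-2)(A-\phi)-1]\leq 0$ and then invoking the sub-mean value inequality for superharmonic functions) are all correct and match the paper. Your sign computation of $\Delta_\infty U$ is also in agreement with the paper's companion proposition. The problem is the last step of the sup-inf inequality.

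The conversion from the \emph{pointwise} inequality $\Delta_\infty U\leq 0$ to the \emph{exact} inequality $\sup_{B}U+\inf_{B}U\leq 2U(x)$ on every ball $B(x,\varrho)\subset\Omega_\alpha$ is not available by the cone-comparison machinery you invoke. Comparison with cones from below (equivalently, $\Delta_\infty U\leq 0$ in the viscosity sense) gives that $r\mapsto S^-_r(x):=\bigl(U(x)-\min_{\partial B_r(x)}U\bigr)/r$ is \emph{non-decreasing}, hence $S^-_\varrho(x)\geq |\nabla U(x)|$, which is the right-hand inequality in your displayed chain. But the left-hand inequality $\max_{\overline B}U-U(x)\leq\varrho|\nabla U(x)|$ asserts that $S^+_r(x):=\bigl(\max_{\partial B_r(x)}U-U(x)\bigr)/r$ is bounded above by its limit as $r\to 0^+$, i.e.\ that $S^+_r$ is non-increasing; no such monotonicity follows from infinity-superharmonicity. (For an infinity-\emph{sub}harmonic function $S^+_r$ is non-decreasing, and for an infinity-harmonic one both $S^\pm_r$ are non-decreasing; neither direction gives what you want.) Indeed the exact sup-inf mean value property is strictly stronger than the viscosity/asymptotic one even for infinity-harmonic functions, so a purely PDE argument cannot close this gap, and in fact your target inequality fails for general smooth infinity-superharmonic functions.

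The paper's proof of this step does not go through $\Delta_\infty$ at all. It uses the axisymmetry of $U$ to reduce to a planar problem, parametrizes $\partial B(z_0,r)$ by rays of angle $\vartheta_0+t$ through the vertex, obtains the two intersection radii $R_\pm(t)$ explicitly, notes (by monotonicity of $\phi$) that $\sup_B U$ has the form $R_+(t)^\gamma(A-\phi(\vartheta_0-t))$, and then proves the scalar inequality
\begin{equation*}
R_+(t)^\gamma\bigl(A-\phi(\vartheta_0-t)\bigr)+R_-(t)^\gamma\bigl(A-\phi(\vartheta_0+t)\bigr)\leq 2R_0^\gamma\bigl(A-\phi(\vartheta_0)\bigr)
\end{equation*}
for all admissible $t$. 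That inequality is where the convexity of $\phi$, the estimates $\tfrac12\bigl(R_+^\gamma\pm R_-^\gamma\bigr)/R_0^\gamma\leq\tfrac12\bigl[(1+r/R_0)^\gamma\pm(1-r/R_0)^\gamma\bigr]$ (Lemma~\ref{aux-lemma}), the power-series bound of Lemma~\ref{<4}, and crucially the \emph{lower} restriction on $A$ in \eqref{Ak} are used. It is worth noting that the lower bound on $A$ plays no role in your proposed argument, which is another symptom that the argument cannot be right: the sup-inf inequality for this $U$ genuinely depends on $A$ being large enough, whereas $\Delta_\infty U\leq 0$ only uses the upper bound on $A$. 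You would need to reproduce this explicit one-dimensional estimate (or an equivalent) to complete the proof.
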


\subsection{Proof of \Cref{mainthm}}

The regularity of $U$ is a direct
consequence of its construction. To see \eqref{u-bounds} we recall
\eqref{bounds-phi} together with \eqref{Ak} to get that, for every
$0\leq\theta\leq\pi-\alpha$,
\begin{equation*}
    0
    <
    \frac{3\pi^2}{2(\sin\alpha)^{n-2}}
    \leq
    A-\phi(\theta)
    \leq
    \frac{1}{\gamma(\gamma+n-2)}.
\end{equation*}
Then \eqref{u-bounds} follows.
\newline

In order to show \eqref{U-supersolution}, let us recall from
\cite[Lemma 2.4]{HAY-KEN} the expression of the laplacian of $U$ in
the polar coordinates $x\leftrightarrow(R,\theta)$,
\begin{equation*}
    \Delta U
    =
    R^{\gamma-2}\big[-\phi''(\theta)-(n-2)\phi'(\theta)\cot\theta + \gamma(\gamma+n-2)\big(A-\phi(\theta)\big)\big],
\end{equation*}
which, together with \eqref{edo},  gives
\begin{equation}\label{Laplacian}
    \Delta U
    =
    -R^{\gamma-2}\big[1-\gamma(\gamma+n-2)\big(A-\phi(\theta)\big)].
\end{equation}
Since $\phi\geq 0$ and $A$ and $\gamma$ satisfy \eqref{Ak}, it turns
out that $\Delta U\leq 0$. That is, $U$ is superharmonic and the
first inequality in \eqref{U-supersolution} follows by the mean
value property for superharmonic functions.
\newline

Before proving the second inequality in \eqref{U-supersolution}, we
first note that, since $U$ is rotationally invariant with respect to
the $x_1$-axis, the problem is actually bidimensional. Therefore we
replace $x\in\R^n$ by the complex number $z=Re^{i\vartheta}$, where
$R = |x|$, $\cos \vartheta = \frac{x_1}{|x|}$ and assume that
$\Omega_\alpha$ lies in the complex plane, so
\begin{equation*}
    \Omega_\alpha
    =
    \set{z=Re^{i\vartheta}}{R>0,\,|\vartheta|<\pi-\alpha}.
\end{equation*}
Then the second inequality in \eqref{U-supersolution} is equivalent
to
\begin{equation}\label{sup-inf n=2}
    U(z_0)
    \geq
    \frac{1}{2}\sup_{B(z_0,r)}U+\frac{1}{2}\inf_{B(z_0,r)}U
\end{equation}
for each $z_0=R_0e^{i\vartheta_0}$ and $0<r<R_0$ such that $B(z_0,r)\subset\Omega_\alpha$. Here we assume, by symmetry, that $0\leq\vartheta_0<\pi-\alpha$.

Observe that $\overline{B}(z_0 ,r) $ lies in the cone $\set{Re^{i\vartheta}}{|\vartheta-\vartheta_0| \leq t_m}$, where
\begin{equation*}
    t_m
    =
    \arcsin\Big(\frac{r}{R_0}\Big).
\end{equation*}
Given $|t|\leq t_m$, elementary computations show that the ray
$\set{Re^{i(\vartheta_0+t)}}{R>0}$ intersects $\partial B(z_0,r)$
at two points
$R_+(t)e^{i(\vartheta_0+t)} $ and $R_-(t)e^{i(\vartheta_0+t)}$,
where
\begin{equation}\label{R+- n=2}
    R_\pm(t)
    =
    R_0\bigg(\cos t\pm\sqrt{\Big(\frac{r}{R_0}\Big)^2-\sin^2t}\,\bigg).
\end{equation}

By \Cref{phi}, $\phi$ is increasing and even, $\phi\geq 0$ and
$\phi(0)= 0$. It follows that $\displaystyle \sup_{B(z_0,r)}U$ must
be of the form $R_+^\gamma(t)(A -\phi(\vartheta_0-t)) $ for some $0
\leq t \leq t_m$. Then
\begin{equation*}
    \sup_{B(z_0,r)}U+\inf_{B(z_0 ,r)}U
    \leq
    R_+^\gamma(t)\big(A-\phi(\vartheta_0-t)\big)+R_-^\gamma(t)\big(A-\phi(\vartheta_0+t)\big),
\end{equation*}
and, since $U(z_0)=R_0^\gamma\big(A-\phi(\vartheta_0)\big)$ by
definition, the desired  inequality \eqref{sup-inf n=2} will follow
from the next lemma.

\begin{lemma}
Let $A>0$ and $\gamma\in(0,\frac{1}{2}]$ satisfy \eqref{Ak}. For
$z_0=R_0e^{i\vartheta_0}$ and $0<r<R_0$ such that
$B(z_0,r)\subset\Omega_\alpha$, the inequality
\begin{equation}\label{key-ineq}
    R_+(t)^\gamma\big(A-\phi(\vartheta_0-t)\big)+R_-(t)^\gamma\big(A-\phi(\vartheta_0+t)\big)
    \leq
    2R_0^\gamma\big(A-\phi(\vartheta_0)\big)
\end{equation}
holds for every $|t|\leq\arcsin\big(\frac{r}{R_0}\big)$, where $R_\pm(t)$ were defined in \eqref{R+- n=2}.
\end{lemma}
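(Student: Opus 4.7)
The plan is to verify \eqref{key-ineq} by direct estimation exploiting the specific structure of $U=R^\gamma(A-\phi(\vartheta))$. By the evenness of $\phi$ and its monotonicity on $[0,\pi)$ (\Cref{phi}), for fixed $|t|$ the left-hand side is maximized when the larger factor $R_+^\gamma$ is paired with the smaller value of $\phi$; together with the standing assumption $\vartheta_0\geq 0$, this reduces matters to $t\in[0,t_m]$. Set $c:=\cos t$ and $q:=\sqrt{s^2-\sin^2 t}$ with $s:=r/R_0\in(0,1)$, so that $R_\pm(t)=R_0(c\pm q)$; the key algebraic identities are $R_++R_-=2R_0\cos t$ and $R_+R_-=R_0^2-r^2$ (power of a point).

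Rearranging \eqref{key-ineq}, it suffices to show $F(t)\geq 0$ where
\begin{equation*}
    F(t)=(2R_0^\gamma-R_+^\gamma-R_-^\gamma)(A-\phi(\vartheta_0))+R_+^\gamma[\phi(\vartheta_0-t)-\phi(\vartheta_0)]+R_-^\gamma[\phi(\vartheta_0+t)-\phi(\vartheta_0)].
\end{equation*}
The radial coefficient is non-negative: concavity of $x\mapsto x^\gamma$ (valid since $\gamma\in(0,1/2]$) gives $R_+^\gamma+R_-^\gamma\leq 2(R_0\cos t)^\gamma\leq 2R_0^\gamma$, while hypothesis \eqref{Ak} combined with \Cref{phi} yields the uniform lower bound $A-\phi(\vartheta_0)\geq 3\pi^2/(2(\sin\alpha)^{n-2})>0$. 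The angular part is bounded below via the tangent-line inequality $\phi(\vartheta_0\pm t)\geq\phi(\vartheta_0)\pm t\phi'(\vartheta_0)$, yielding
\begin{equation*}
    R_+^\gamma[\phi(\vartheta_0-t)-\phi(\vartheta_0)]+R_-^\gamma[\phi(\vartheta_0+t)-\phi(\vartheta_0)]\geq -t\phi'(\vartheta_0)(R_+^\gamma-R_-^\gamma).
\end{equation*}

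It therefore suffices to verify
\begin{equation*}
    t\phi'(\vartheta_0)(R_+^\gamma-R_-^\gamma)\leq(2R_0^\gamma-R_+^\gamma-R_-^\gamma)(A-\phi(\vartheta_0)),
\end{equation*}
and applying $\phi'(\vartheta_0)\leq\pi/(\sin\alpha)^{n-2}$ (\Cref{phi}) together with the lower bound on $A-\phi(\vartheta_0)$, this reduces to the purely geometric inequality
\begin{equation*}
    \frac{2t}{3\pi}(R_+^\gamma-R_-^\gamma)\leq 2R_0^\gamma-R_+^\gamma-R_-^\gamma,
\end{equation*}
to be established uniformly in $(t,s)\in[0,t_m]\times(0,1)$.

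The main obstacle is this last inequality, since the radial concavity defect $2R_0^\gamma-R_+^\gamma-R_-^\gamma$ degenerates in two distinct regimes: when $t\to 0$ (so $q\to s$ and both sides vanish to second order in $s$) and when $t\to t_m$ (so $q\to 0$ and the antisymmetric factor $R_+^\gamma-R_-^\gamma$ vanishes linearly in $q$). I would address this by splitting
\begin{equation*}
    2R_0^\gamma-R_+^\gamma-R_-^\gamma=[2R_0^\gamma-2(R_0\cos t)^\gamma]+[2(R_0\cos t)^\gamma-R_+^\gamma-R_-^\gamma],
\end{equation*}
bounding the first bracket below by a multiple of $\gamma R_0^\gamma t^2$ (from $1-\cos^\gamma t\gtrsim t^2$) and the second by a multiple of $\gamma(1-\gamma)R_0^\gamma q^2$ (from second-order concavity of $x^\gamma$ at $x=R_0\cos t$), and then absorbing the cross-term $tq$ via an AM-GM comparison, with the constraint $\gamma\leq 1/2$ providing the required margin.
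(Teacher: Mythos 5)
Your reduction to the final ``purely geometric'' inequality is correct and in fact arrives, after the same tangent-line use of convexity, at precisely the same intermediate inequality the paper proves, namely
\begin{equation*}
t\,\phi'(\vartheta_0)\,\bigl(R_+^\gamma-R_-^\gamma\bigr)\;\leq\;\bigl(2R_0^\gamma-R_+^\gamma-R_-^\gamma\bigr)\bigl(A-\phi(\vartheta_0)\bigr),
\end{equation*}
which in the paper appears in the equivalent form $\frac{\lambda_+-\lambda_-}{1-(\lambda_++\lambda_-)}\,t\,\phi'(\vartheta_0)\leq A-\phi(\vartheta_0)$ with $\lambda_\pm=\tfrac12(R_\pm/R_0)^\gamma$. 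The preliminary observations (the rearrangement argument reducing to $t\geq 0$, the concavity bound $R_+^\gamma+R_-^\gamma\leq 2(R_0\cos t)^\gamma\leq 2R_0^\gamma$, and the lower bound $A-\phi\geq 3\pi^2/(2(\sin\alpha)^{n-2})$ from \eqref{Ak} and \eqref{bounds-phi}) are all fine.

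The genuine gap is that you never prove the resulting inequality $\frac{2t}{3\pi}(R_+^\gamma-R_-^\gamma)\leq 2R_0^\gamma-R_+^\gamma-R_-^\gamma$: the final paragraph is a strategy outline (split the concavity defect into a $t^2$ part and a $q^2$ part, then absorb $tq$ by AM--GM), not an argument with verified constants and validity over the whole range $(t,s)\in[0,\arcsin s]\times(0,1]$. Your Taylor-expansion heuristics are only asymptotic near $t,q\to 0$; they say nothing about, say, $s$ near $1$. By contrast, the paper handles this step with two separate lemmas proved in the appendix: \Cref{aux-lemma} shows that both $R_+^\gamma\pm R_-^\gamma$ (equivalently $\lambda_+\pm\lambda_-$) are monotone in $t$ and hence extremized at the endpoint, reducing the two-parameter inequality to a one-parameter one in $s=r/R_0$; and \Cref{<4} then bounds the resulting ratio $\frac{\frac{s}{2}[(1+s)^\gamma-(1-s)^\gamma]}{1-\frac12[(1+s)^\gamma+(1-s)^\gamma]}\leq \frac{2}{1-\gamma}\leq 4$ via a term-by-term comparison of binomial series. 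Without something playing the role of these two lemmas, your proof does not close. Note also that after substituting the crude bounds $\phi'\leq\pi/(\sin\alpha)^{n-2}$ and $A-\phi\geq 3\pi^2/(2(\sin\alpha)^{n-2})$ separately, the constant $\tfrac{2}{3\pi}$ you are left with is actually slightly more demanding than the constant $\tfrac{1}{2\pi}$ that the paper's appendix lemmas establish, so even importing \Cref{aux-lemma} and \Cref{<4} verbatim would not immediately finish your version: you would still need to revisit where the factor is lost (the paper keeps the full quantity $A-\phi(\vartheta_0)$ and bounds $\phi+2\pi\phi'\leq A$ at the very end, rather than plugging in a uniform lower bound for $A-\phi$ in the middle).
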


\begin{proof}
Let us denote
\begin{equation*}
    \lambda_\pm
    =
    \lambda_\pm(t)
    =
    \frac{1}{2}\pare{\frac{R_\pm(t)}{R_0}}^\gamma
\end{equation*}
for simplicity. Then \eqref{key-ineq} is equivalent to
\begin{equation*}
    F(t)
    :\,=
    \frac{\phi(\vartheta_0)-\big(\lambda_+\phi(\vartheta_0-t)+\lambda_-\phi(\vartheta_0+t)\big)}{1-\big(\lambda_++\lambda_-\big)}
    \leq
    A
\end{equation*}
for every $0\leq t\leq\arcsin\big(\frac{r}{R_0}\big)$. We show that
the previous inequality holds true. Observe that after a
rearrangement of the terms we can write
\begin{equation*}
    F(t)
    =
    \phi(\vartheta_0)+\frac{\lambda_++\lambda_-}{1-(\lambda_++\lambda_-)}\Big[\phi(\vartheta_0)-\frac{\lambda_+}{\lambda_++\lambda_-}\phi(\vartheta_0-t)-\frac{\lambda_-}{\lambda_++ \lambda_-}\phi(\vartheta_0+t)\Big].
\end{equation*}
Let us focus on the term in brackets. From the convexity of $\phi$ we can estimate the term in brackets as follows
\begin{multline*}
    \phi(\vartheta_0)-\frac{\lambda_+}{\lambda_++\lambda_-}\phi(\vartheta_0-t)-\frac{\lambda_-}{\lambda_++ \lambda_-}\phi(\vartheta_0+t)
    \\
    \leq
    \phi(\vartheta_0)-\phi\Big(\vartheta_0-\frac{\lambda_+-\lambda_-}{\lambda_++\lambda_-}\,t\Big)
    \leq
    \frac{\lambda_+-\lambda_-}{\lambda_++\lambda_-}\,t\phi'(\vartheta_0).
\end{multline*}
Thus
\begin{equation*}
    F(t)\leq
    \phi(\theta_0)+\frac{\lambda_+-\lambda_-}{1-(\lambda_++\lambda_-)}\,t\phi'(\theta_0).
\end{equation*}
Notice that, since the function $\phi$ is increasing in $(0,\pi)$
and $\vartheta_0\geq 0$ by assumption, then $\phi'(\vartheta_0)\geq
0$. Next, using \Cref{aux-lemma} (see \Cref{appendix}) we get
\begin{equation*}
    \lambda_+\pm\lambda_-
    =
    \frac{R_+(t)^\gamma+R_-(t)^\gamma}{2R_0^\gamma}
    \leq
    \frac{1}{2}\Big(1+\frac{r}{R_0}\Big)^\gamma\pm\frac{1}{2}\Big(1-\frac{r}{R_0}\Big)^\gamma,
\end{equation*}
which together with $t\leq\arcsin\big(\frac{r}{R_0})\leq\frac{\pi r}{2R_0}$ yields
\begin{equation*}
    F(t)\leq
    \phi(\vartheta_0)+\frac{\pi}{2}\cdot\frac{\displaystyle \frac{r}{2R_0}\Big[\Big(1+\frac{r}{R_0}\Big)^\gamma-\Big(1-\frac{r}{R_0}\Big)^\gamma\Big]}
    {\displaystyle 1-\frac{1}{2}\Big[\Big(1+\frac{r}{R_0}\Big)^\gamma+\Big(1-\frac{r}{R_0}\Big)^\gamma\Big]}\,\phi'(\vartheta_0).
\end{equation*}
By \Cref{<4} together with the fact that $\gamma\in(0,\frac{1}{2}]$
we get
\begin{equation*}
    F(t)\leq
    \phi(\vartheta_0)+2\pi\phi'(\vartheta_0)
    \leq
    \frac{\pi^2}{8}+\frac{3\pi^2+\pi}{2(\sin\alpha)^{n-2}}.
\end{equation*}
where in the second inequality we have recalled the estimates \eqref{bounds-phi}. Then the result follows from the choice of $A$ in \eqref{Ak}.
\end{proof}

\begin{remark}
We want to emphasize that, in the proof of \Cref{mainthm}, the
definition of $\phi$ as solution of the differential equation
\eqref{edo} is used exclusively to show the first inequality in
\eqref{U-supersolution}, while for the second inequality  we only
need to require the convexity of $\phi$ in $(-\pi,\pi)$ and the fact
that $\phi$ is increasing in $[0,\pi)$.
\end{remark}

The following proposition says that the function $U$ is also a
$p$-superharmonic for each $p\in [2, \infty]$.
\begin{proposition}
Let $U$ be the function defined in \eqref{maindef} with $A>0$ and
$\gamma\in(0,\frac{1}{2}]$ as in \eqref{Ak}. Then $\Delta_p U\leq 0$
in $\Omega_\alpha$ for each $p\in [2,\infty]$.
\end{proposition}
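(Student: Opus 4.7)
The plan is to appeal to the classical identity \eqref{p-lapla}, which at any point where $\nabla U\ne 0$ can be written as
\[
\Delta_p U = |\nabla U|^{p-2}\,\Delta U + (p-2)|\nabla U|^{p-4}\,\Delta_\infty U.
\]
Since $g:=A-\phi>0$ on $\overline{\Omega_\alpha}$ (by \eqref{Ak}, the lower bound on $A$ strictly exceeds $\max\phi$), we have $U_R=\gamma R^{\gamma-1}g>0$ and hence $|\nabla U|>0$ on $\Omega_\alpha\setminus\{0\}$; the identity applies classically throughout $\Omega_\alpha$ with no viscosity subtlety. From \eqref{Laplacian} we already know $\Delta U\le 0$, so the whole problem reduces to proving $\Delta_\infty U\le 0$. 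Since $p-2\ge 0$, this will at once give $\Delta_p U\le 0$ for every $p\in[2,\infty)$, and the case $p=\infty$ is exactly the same inequality $\Delta_\infty U\le 0$.

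To handle the infinity-Laplacian, I would exploit the axial symmetry of $U$ about the $x_1$-axis that was already used in deriving \eqref{sup-inf n=2}. The gradient $\nabla U$ lies in the meridian $2$-plane spanned by the orthonormal polar frame $(\hat R,\hat\theta)$, so $\nu=\nabla U/|\nabla U|$ is tangent to that plane; only the $(\hat R,\hat\theta)$-block of the Hessian then enters $D^2U(\nu,\nu)$. A short check of the Christoffel symbols in $n$-dimensional spherical coordinates shows that this block coincides with the planar polar expressions
\[
H_{RR}=\gamma(\gamma-1)R^{\gamma-2}g,\quad H_{R\theta}=(\gamma-1)R^{\gamma-2}g',\quad H_{\theta\theta}=R^{\gamma-2}(\gamma g+g''),
\]
so the calculation effectively reduces to two dimensions.

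Writing $\nu$ as the unit vector in the direction $\gamma g\,\hat R+g'\,\hat\theta$ and expanding the quadratic form, a direct computation (substituting $g'=-\phi'$ and $g''=-\phi''$) yields
\[
\Delta_\infty U \;=\; \frac{R^{3\gamma-4}\bigl[\gamma^{3}(\gamma-1)g^{3}+\gamma(2\gamma-1)g(\phi')^{2}-(\phi')^{2}\phi''\bigr]}{1}.
\]
Each of the three summands in the bracket is non-positive: the first because $\gamma\in(0,\tfrac{1}{2}]$ and $g>0$; the second because $2\gamma-1\le 0$; and the third because $\phi$ is convex on $(-\pi,\pi)$ by \Cref{phi}. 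Hence $\Delta_\infty U\le 0$, and plugging this together with $\Delta U\le 0$ into the decomposition above gives $\Delta_p U\le 0$ for every $p\in[2,\infty]$.

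The main obstacle I anticipate is purely a bookkeeping one: verifying that the Hessian block in the $(\hat R,\hat\theta)$-directions really coincides with the planar polar expressions, i.e.\ that the Christoffel symbols coupling the radial and polar directions to the transverse $(n-2)$ spherical directions contribute nothing to $H_{RR}$, $H_{R\theta}$, $H_{\theta\theta}$ when acting on a function of $(R,\theta)$ alone. Once this (essentially one-line) reduction is in place, the remaining sign analysis uses only the constraint $\gamma\le\tfrac12$, the positivity of $g$ forced by \eqref{Ak}, and the convexity of $\phi$ established in \Cref{phi}.
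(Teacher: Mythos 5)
Your proposal is correct and follows essentially the same route as the paper: reduce via \eqref{p-lapla} to the two sign conditions $\Delta U\le 0$ and $\Delta_\infty U\le 0$, take the former from \eqref{Laplacian}, express $\Delta_\infty U$ in polar coordinates, and verify the three bracketed terms are non-positive using $\gamma\in(0,\tfrac12]$, $A-\phi>0$, and $\phi''\ge 0$. The only differences are cosmetic: you derive the axial-symmetry reduction and the polar Hessian explicitly, whereas the paper simply cites DeBlassie--Smits for the formula (and in fact your first term $\gamma^3(\gamma-1)g^3$ corrects an apparent typo in the paper's displayed expression, which omits the cube on $A-\phi$; the sign analysis is unaffected).
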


\begin{proof}
From the representation \eqref{p-lapla} and the fact that $ p \geq
2$ it is enough to check that $\Delta U\leq 0$ and $\Delta_\infty U \leq 0$.

The choices of $A$ and $\gamma$ in the expression of $\Delta U$  in \eqref{Laplacian} easily give that $\Delta U \leq 0$.  We also need the expression of $\Delta_\infty U$ in polar coordinates (see \cite{DEB-SMI}):
\begin{equation*}
    \Delta_\infty U
    =
    -R^{3\gamma-4}\Big[\gamma^3(1-\gamma)\big(A-\phi\big)+\gamma(1-2\gamma)\big(A-\phi\big)(\phi')^2+(\phi')^2\phi''\Big].
\end{equation*}
Observe that, since $\gamma\in(0,\frac{1}{2}]$, $A-\phi>0$ and $\phi''\geq 0$, the term in brackets is positive, so $\Delta_\infty U\leq 0$.
\end{proof}

\subsection{Barrier for the complement of a truncated cone}\label{Truncated Cone}

Let $\alpha\in(0,\frac{\pi}{2})$, $r>0$ and define
\begin{equation*}
    \Omega_{\alpha,r} = \R^n \setminus K_{\alpha ,r}
\end{equation*}
where $K_{\alpha, r}$ is as in \eqref{Kr}. Note that $\Omega_{\alpha
,r}$  is the complement of a truncated cone and that
$\Omega_{\alpha,r}\supset\Omega_\alpha$. Let $U:\Omega_\alpha\to\R$
be the function defined in \eqref{maindef} for $A>0$ and
$\gamma\in(0,\frac{1}{2}]$ as in \eqref{Ak}. Then, from the first
inequality in \eqref{u-bounds} it follows that
\begin{equation*}
    m
    =
    \inf\set{U(x)}{x\in \Omega_\alpha\setminus B(0,r)}
    \geq
    \alpha^{2-n}r^\gamma
    >
    0.
\end{equation*}

\begin{lemma}\label{truncated}
Let $U$ be as in \eqref{maindef} and define $w:\Omega_{\alpha,r}\to\R$ by
\begin{equation}\label{w}
    w(x)
    =
    \begin{cases}
        \min\{U(x),m\}
        & \text{ if } x\in\Omega_{\alpha,r}\cap B(0,r),
        \\
        m
        & \text{ if } x\in\Omega_{\alpha,r}\setminus B(0,r).
\end{cases}
\end{equation}

Then $w\in C(\overline\Omega_{\alpha,r})$, $w(0)=0$, $w>0$ in
$\overline{\Omega_{\alpha ,r}} \setminus \{ 0 \}$,
\begin{equation}\label{w-supersolution}
    w(x)
    \geq
    \dashint_{B(x,\varrho)}w(y)\ dy
    \quad , \qquad
    w(x)
    \geq
    \frac{1}{2}\sup_{B(x,\varrho)}w+\frac{1}{2}\inf_{B(x,\varrho)}w
\end{equation}
for every ball $B(x,\varrho)\subset\Omega_{\alpha,r}$ and
\begin{equation}\label{w-bounds}
    \L(|x|)
    \leq
    w(x)
    \leq
    \gamma^{-2}|x|^\gamma
\end{equation}
for every $x\in\overline\Omega_{\alpha,r}$, where
\begin{equation}\label{def L}
    \L(t)
    :\,=
    \alpha^{2-n}\,\min\{t,r\}^\gamma.
\end{equation}
In particular, for each $p\in [2, \infty]$ and any admissible radius
function in $\Omega_{\alpha, r}$, $w$ is, simultaneously, a
$\T_{\rho, p}$-barrier and a barrier for the $p$-laplacian at $0$ in
$\Omega_{\alpha,r}$.
\end{lemma}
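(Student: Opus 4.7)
The plan is to prove \Cref{truncated} by first verifying the basic properties of $w$ (continuity, sign, pointwise bounds) and then establishing the two supersolution inequalities \eqref{w-supersolution} through a case analysis on the ball $B = B(x_0,\varrho) \subset \Omega_{\alpha,r}$.

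Continuity of $w$ on $\overline{\Omega_{\alpha,r}}$ reduces to matching on the interface $\partial B(0,r) \cap \overline{\Omega_\alpha}$, where $U \geq m$ by the definition of $m$ as an infimum, so $\min\{U,m\} = m$ coincides with the constant extension. The identity $w(0) = 0$, positivity elsewhere, and the bounds \eqref{w-bounds} (obtained by splitting on $|x|\leq r$ versus $|x|>r$) all follow routinely from the corresponding statements for $U$ in \Cref{mainthm} together with $m \geq \alpha^{2-n} r^\gamma$.

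For the supersolution inequalities, the central observation is that the pointwise minimum of two functions $u$ and $v$ each satisfying the mean value and sup-inf supersolution inequalities on a given ball $B$ is again a supersolution on $B$: indeed $\dashint_B \min\{u,v\} \leq \min\{\dashint_B u, \dashint_B v\} \leq \min\{u(x_0),v(x_0)\}$, and similarly for $\frac{1}{2}\sup_B + \frac{1}{2}\inf_B$ using the elementary bounds $\sup_B\min\{u,v\}\leq\min\{\sup_B u,\sup_B v\}$ and $\inf_B\min\{u,v\}\leq\min\{\inf_B u,\inf_B v\}$. Applied to $u = U$ (via \eqref{U-supersolution}) and $v \equiv m$, this handles every ball $B \subset \Omega_\alpha$. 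When $B$ lies entirely in $\Omega_{\alpha,r} \setminus B(0,r)$, the fact that $U \geq m$ on $\overline{\Omega_\alpha}\setminus B(0,r)$ implies $w \equiv m$ on $B$ and both inequalities are trivial.

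The delicate case is when $B \subset \Omega_{\alpha,r}$ crosses $\partial B(0,r)$, possibly entering $V := \Omega_{\alpha,r}\setminus\Omega_\alpha \subset \{|x|>r\}$. Since $0 \in K_{\alpha,r}$, the inclusion $B \subset \Omega_{\alpha,r}$ forces $|x_0| \geq \varrho$, while the existence of a point of $B$ with $|x|>r$ forces $\varrho > r - |x_0|$, yielding $|x_0| > r/2$. Since $w \equiv m$ on $B \setminus B(0,r)$ we obtain $\sup_B w = m$ and $\inf_B w = \min\{m, \inf_{B \cap B(0,r)} U\}$; the only non-trivial subcase is $w(x_0) = U(x_0) < m$ with $x_0 \in \Omega_\alpha \cap B(0,r)$. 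In that case I would apply \eqref{U-supersolution} to the maximal sub-ball $B^* = B(x_0, \dist(x_0,\partial\Omega_\alpha)) \subset \Omega_\alpha$ and exploit the explicit formula $U = |x|^\gamma(A - \phi(\theta))$, together with $|x_0| > r/2$, to exhibit a point of $B^*$ at which $U \geq m$; combining $2U(x_0)\geq\sup_{B^*}U+\inf_{B^*}U$ with $\inf_{B^*} U \geq \inf_{B \cap \Omega_\alpha} U$ then yields the desired sup-inf inequality for $w$ on $B$, and the mean value case follows by the same strategy. The $\T_{\rho,p}$-barrier and $p$-laplacian barrier conclusions for $p \in [2,\infty]$ are then immediate from \eqref{w-supersolution} by the convex combination structure of $\T_{\rho,p}$, exactly as at the end of the proof of \Cref{mainthm}. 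The main obstacle is this crossing subcase: the supersolution property is nonlocal, so one must leverage both the uniform exterior cone condition (through $|x_0|>r/2$) and the explicit form of $U$ to bridge the supersolution inequalities across the truncation.
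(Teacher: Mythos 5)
The continuity, sign, and bounds \eqref{w-bounds}, as well as your ``minimum of two supersolutions'' observation, are all correct. The gap is in the case analysis for \eqref{w-supersolution}. You treat the situation where $B=B(x_0,\varrho)$ crosses $\partial B(0,r)$ and $w(x_0)<m$ as a delicate case requiring new ideas (the bound $|x_0|>r/2$, a maximal sub-ball $B^*$, the explicit formula for $U$), and the argument you sketch there is left vague and would be hard to close. But in fact this case never requires separate treatment, because whenever $w(x_0)<m$ one automatically has $B(x_0,\varrho)\subset\Omega_\alpha$ and your first case already applies.

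Here is the geometric fact you are missing. The full cone $K_\alpha$ is a closed convex cone through the origin, so if $x_0\in\Omega_\alpha$ and $z^*$ is the nearest point of $K_\alpha$ to $x_0$, then the variational inequality with the test points $z=(1\pm\epsilon)z^*\in K_\alpha$ yields $\langle x_0-z^*,z^*\rangle=0$, hence $|x_0|^2=|x_0-z^*|^2+|z^*|^2$ and in particular $|z^*|\leq|x_0|$. Now $w(x_0)<m$ forces $x_0\in\Omega_{\alpha,r}\cap B(0,r)$, so $|x_0|<r$ and therefore $|z^*|<r$, i.e.\ $z^*\in K_{\alpha,r}$. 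Consequently $\dist(x_0,K_\alpha)=|x_0-z^*|\geq\dist(x_0,K_{\alpha,r})\geq\varrho$, which is exactly the inclusion $B(x_0,\varrho)\subset\Omega_\alpha$.

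With this in hand the proof reduces to the two-case split the paper actually uses: if $w(x_0)<m$, then $B(x_0,\varrho)\subset\Omega_\alpha$, $w\leq U$ on the ball and $w(x_0)=U(x_0)$, so both inequalities in \eqref{w-supersolution} follow from \eqref{U-supersolution} for $U$ (your minimum lemma gives the same conclusion, slightly more abstractly); if $w(x_0)=m$, both inequalities are trivial since $w\leq m$ everywhere. Your cases are not mutually exclusive as stated (a ball in $\Omega_\alpha$ can certainly cross $\partial B(0,r)$), and the ``delicate'' crossing subcase with $w(x_0)<m$ is subsumed into the first case, so the $|x_0|>r/2$ estimate and the $B^*$ construction are unnecessary. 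The final barrier conclusions for $\T_{\rho,p}$ and the $p$-laplacian are handled correctly.
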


\begin{proof}
Since $U\in C(\overline\Omega_\alpha)$, the continuity of $w$ only
needs to be checked at $\Omega_\alpha\cap\partial B(0,r)$. Fix
$x_0\in\Omega_\alpha\cap\partial B(0,r)$. Then $U(x_0)\geq m$. From
the continuity of $U$ it follows that $\displaystyle \lim_{x\to
x_0}w(x)=\min \{U(x_0 ),m\}=m=w(x_0)$. The inequalities in
\eqref{w-bounds} follow from the definition of $w$ and
\eqref{u-bounds}. To prove  \eqref{w-supersolution}, choose any ball
$B(x,\rho)\subset\Omega_{\alpha,r}$. We distinguish two cases:
\begin{enumerate}[label=\arabic*)]
\item If $w(x)<m$ then observe that $B(x,\varrho)\subset\Omega_\alpha$
and that $w\leq U$ in $B(x,\varrho)$. It follows from \Cref{mainthm}
that
\begin{equation*}
    \dashint_{B(x,\varrho)}w(y)\ dy
    \leq
    \dashint_{B(x,\varrho)}U(y)\ dy
    \leq
    U(x)
    =
    w(x)
\end{equation*}
and
\begin{equation*}
    \frac{1}{2}\sup_{B(x,\varrho)}w+\frac{1}{2}\inf_{B(x,\varrho)}w
    \leq
    \frac{1}{2}\sup_{B(x,\varrho)}U+\frac{1}{2}\inf_{B(x,\varrho)}U
    \leq
    U(x)
    =
    w(x).
\end{equation*}
\item If $w(x)=m$ then \eqref{w-supersolution}
follows immediately since $w\leq m$.
\end{enumerate}

\

From \eqref{w-supersolution} it is immediate that $\T_{\rho, p}w\leq
w$ for every admissible radius function $\rho$ in
$\Omega_{\alpha,r}$ and each $p\in [2, \infty]$. Finally, that $w$
is $p$-superharmonic is consequence of the $p$-superharmonicity of
$U$, the invariance of $p$-superharmonic functions by rotations and
the Pasting Lemma (\cite[Lemma 7.9]{HKM}).

\end{proof}

\subsection{Proof of \Cref{mainthmloc}}\label{ECC}
Let $\Omega \subset \R^n$ be a  bounded domain satisfying the
uniform exterior cone condition with constants $\alpha \in (0,
\frac{\pi}{2})$ and $r>0$. From \eqref{gamma} it follows in
particular that  $\gamma \in (0, \frac{1}{2}]$ and
\begin{equation*}
    \frac{1}{\gamma(\gamma+n-2)}
    >
    \frac{1}{\gamma n}
    >
    \frac{13\pi^2+4\pi}{8(\sin\alpha)^{n-2}}
    \geq
    \frac{\pi^2}{8}+\frac{3\pi^2+\pi}{2(\sin\alpha)^{n-2}},
\end{equation*}
which allows to choose $A>0$ so that \eqref{Ak} holds and,
subsequently, to construct $U$ and $w$ as in \eqref{maindef} and
\eqref{w}, respectively.

Recalling \Cref{DEF-ECC}, there exists, for every
$\xi\in\partial\Omega$, a rotation $R_\xi\in SO(n)$ in $\R^n$ such
that $\xi+R_\xi(K_{\alpha,r})\subset\R^n\setminus\Omega$ or,
equivalently,
\begin{equation*}
    R_\xi^\top(\Omega-\xi)
    \subset
    \Omega_{\alpha,r}.
\end{equation*}
Then we define $w_\xi:\overline\Omega\to\R$ by
\begin{equation}\label{Trho-barrier}
    w_\xi(x)
    =
    w\big(R_\xi^\top(x-\xi)\big),
\end{equation}
where $w$ is the barrier function given by \eqref{w}. Recalling
\Cref{truncated}, we observe that
$w_\xi$ is non-negative in $\overline\Omega$ and $w_\xi(x)=0$ if and
only if $x=\xi$. On the other hand, if $\rho$ is an admissible
radius function in $\Omega$, since
$\xi+R_\xi\big(B_\rho(x)\big)=B(\xi+R_\xi(x),\rho(x))\subset\Omega_{\alpha,r}$,
then
\begin{equation*}
    \T_\rho w_\xi(x)
    =
    \T_\rho w\pare{R_\xi^\top(x-\xi)}
    \leq
    w\pare{R_\xi^\top(x-\xi)}
    =
    w_\xi(x),
\end{equation*}
Thus \eqref{MS-supersolution} follows from \eqref{w-supersolution}
and, in particular, $w_\xi$ is a $\T_\rho$-barrier at
$\xi\in\partial\Omega$. The fact that $w_{\xi}$ is also a barrier
for the $p$-laplacian follows in a similar way. Finally, from
\eqref{w-bounds} we get \eqref{lower-upper-control} for every $x\in\overline\Omega$, where $\L$ is given by \eqref{def L}. This finishes the proof of the theorem.

\section{Existence of solutions}\label{sec.existence}

We split the section in two parts. In the first part we show the equicontinuity of the sequence $\{\T_\rho^k u\}$ in $\overline\Omega$. In the second one we establish existence and uniqueness of the Dirichlet Problem for $\T_\rho$.

\subsection{Equicontinuity results}

At this point we refer to \cite[Theorem 4.5]{ARR-LLO-18} for the equicontinuity of the sequence $\{\T_\rho^k\}_k$ at
interior points of $\Omega$, where $u\in \mathcal{K}_f$ (see also \cite[Proposition 2.6]{ARR-LLO-16-1}).

\begin{theorem}[{\cite[Theorem 4.5]{ARR-LLO-18}}]\label{THM-LOCAL-EQUICONTINUITY}
Let $\Omega\subset\R^n$ be a bounded domain and $p\in[2,\infty)$. Suppose that $\rho\in C(\overline\Omega)$ is a
continuous admissible radius function in $\Omega$ satisfying
\begin{equation*}
    \lambda\dist(x,\partial\Omega)^\beta
    \leq
    \rho(x)
    \leq
    \Lambda\dist(x,\partial\Omega)
\end{equation*}
for all $x\in\Omega$, where
\begin{equation*}
    \beta
    \geq
    1,
    \qquad
    0
    <
    \Lambda
    <
    1-\pare{\frac{p-2}{n+p}}^{1/\beta}
    \qquad\text{ and }\qquad
    0
    <
    \lambda
    \leq
    (\diam\Omega)^{1-\beta}\Lambda.
\end{equation*}
Then, for any $u\in C(\overline\Omega)$, the sequence of iterates $\{\T_\rho^k u\}_k$ is locally uniformly equicontinuous
in $\Omega$.
\end{theorem}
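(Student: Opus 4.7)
The statement is cited from the authors' earlier paper [ARR-LLO-18], so my plan is to sketch the analytic approach I would take. Fix a compact $K\Subset\Omega$ and a slightly larger compact $K'\Subset\Omega$ with $K\subset K'$; by the lower bound on $\rho$, we have $\rho_{\min}:=\inf_{K'}\rho\geq\lambda(\dist(K',\partial\Omega))^\beta>0$. The goal is to produce a modulus of continuity for the iterates $v_k:=\T_\rho^k u$ that is uniform in $k$ on $K$. Once this is done, equicontinuity follows, and by exhausting $\Omega$ with such compacts we obtain the statement.

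The core of the argument is a pair of pointwise estimates for the two building blocks of $\T_\rho=\frac{p-2}{n+p}\S_\rho+\frac{n+2}{n+p}\M_\rho$. First, the averaging operator $\M_\rho$ is \emph{smoothing} on $K$: a change-of-variables argument comparing the balls $B_\rho(x)$ and $B_\rho(y)$ gives
\[
|\M_\rho v(x)-\M_\rho v(y)|\leq\frac{C_0\|v\|_\infty}{\rho_{\min}}\,|x-y|,\qquad x,y\in K,
\]
while the sup-inf operator $\S_\rho$ merely \emph{propagates} the modulus,
\[
|\S_\rho v(x)-\S_\rho v(y)|\leq\omega_v\bigl(|x-y|+\omega_\rho(|x-y|)\bigr),
\]
where $\omega_v,\omega_\rho$ are local moduli of continuity. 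Combined with $\|v_k\|_\infty\leq\|u\|_\infty$ (non-expansiveness plus maximum principle), these two estimates give the recursion
\[
\omega_{v_{k+1}}(\delta)\leq\frac{p-2}{n+p}\,\omega_{v_k}\bigl(\Psi(\delta)\bigr)+C\|u\|_\infty\,\delta,
\]
with $\Psi(\delta)=\delta+\omega_\rho(\delta)$ and $C=C(n,K,\rho)$. Iterating and summing yields
\[
\omega_{v_k}(\delta)\leq\left(\frac{p-2}{n+p}\right)^{\!k}\!2\|u\|_\infty+C\|u\|_\infty\sum_{j=0}^{k-1}\left(\frac{p-2}{n+p}\right)^{\!j}\Psi^j(\delta).
\]

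The first term is $o(1)$ in $k$ uniformly in $\delta$, since $\frac{p-2}{n+p}<1$. For the second term a uniform bound which vanishes as $\delta\to0^+$ is needed, and this is where the hypothesis \eqref{ctts} enters in an essential way: the condition $\Lambda<1-((p-2)/(n+p))^{1/\beta}$ is exactly what makes $\Psi$ contractive enough (through the Lipschitz-type behaviour of $\rho$ inherited from the upper bound $\rho(x)\leq\Lambda\dist(x,\partial\Omega)$) to beat the geometric factor $\frac{p-2}{n+p}<1$ in the sum. Arzelà--Ascoli then closes the argument. The main obstacle is this scale-balance at each iteration: the recursion only converges when the effective expansion factor of $\Psi$ is dominated by $\bigl(\frac{n+p}{p-2}\bigr)^{1/\beta}$, and the quantitative relationship between $\beta,\lambda,\Lambda$ in \eqref{ctts} encodes precisely this dominance. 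Unlike the boundary case treated by \Cref{mainthmloc}, no auxiliary angular function is needed here, since the interior lower bound $\rho\geq\rho_{\min}>0$ allows $\M_\rho$ alone to furnish the required quantitative smoothing.
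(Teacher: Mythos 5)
This statement is cited from the authors' earlier paper \cite{ARR-LLO-18}, so the present paper contains no proof for you to be compared against; I will therefore evaluate your sketch on its own terms. The overall strategy -- decompose $\T_\rho$ into the smoothing block $\M_\rho$ and the propagating block $\S_\rho$, iterate a modulus-of-continuity recursion, and invoke Arzel\`a--Ascoli -- is a sound template for this type of a priori interior estimate, and the observation that no barriers are needed in the interior is correct. However, the mechanism by which the hypothesis \eqref{ctts} enters your argument is misidentified, and this misidentification hides the step in the recursion that is actually delicate.

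The point you miss is that the operators are nonlocal and that the iteration does not stay on a fixed compact. Fix $\Omega_t=\{x\in\Omega:\dist(x,\partial\Omega)>t\}$. Since $\rho(x)\leq\Lambda\dist(x,\partial\Omega)$, for $x\in\Omega_t$ the ball $B_\rho(x)$ lies in $\Omega_{(1-\Lambda)t}$, so estimating the modulus of $v_{k+1}=\T_\rho v_k$ on $\Omega_t$ requires a modulus of $v_k$ on $\Omega_{(1-\Lambda)t}$, then of $v_{k-1}$ on $\Omega_{(1-\Lambda)^2 t}$, and so on. Your recursion writes a single $\omega_{v_k}(\delta)$ and a single constant $C$, but the $\M_\rho$-smoothing constant is $C_j\sim \rho_{\min,j}^{-1}$ on the $j$-th compact $\Omega_{(1-\Lambda)^j t}$, and by the lower bound $\rho\geq\lambda\dist^\beta$ this constant grows like $\lambda^{-1}\bigl((1-\Lambda)^j t\bigr)^{-\beta}=(1-\Lambda)^{-\beta j}\lambda^{-1}t^{-\beta}$. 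The sum in your estimate is therefore not $\sum_j q^j\Psi^j(\delta)$ with $q=\tfrac{p-2}{n+p}$, but rather
\begin{equation*}
    \sum_{j} \Bigl(\tfrac{p-2}{n+p}\Bigr)^{j}(1-\Lambda)^{-\beta j}\,\Psi^j(\delta),
\end{equation*}
and this geometric series converges precisely when $\tfrac{p-2}{n+p}<(1-\Lambda)^\beta$, i.e.\ $\Lambda<1-\bigl(\tfrac{p-2}{n+p}\bigr)^{1/\beta}$. That is the true role of the structural hypothesis on $\Lambda$. Your explanation -- that the bound $\rho\leq\Lambda\dist$ makes $\Psi$ contractive enough to beat $\tfrac{p-2}{n+p}$ -- is incorrect: that upper bound does not bound $\omega_\rho$ (a function squeezed between $\lambda\dist^\beta$ and $\Lambda\dist$ need not be Lipschitz), and even in the model case $\rho=\Lambda\dist$, where $\Psi$ is $(1+\Lambda)$-Lipschitz, the condition $q(1+\Lambda)<1$ that would make your version of the sum converge is $\Lambda<\tfrac{n+2}{p-2}$, which is strictly weaker than the hypothesis in the theorem. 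So the constraint in \eqref{ctts} cannot be accounted for by $\Psi$-contractivity. As a smaller issue, the change-of-variables estimate for $\M_\rho$ for variable radius is Lipschitz in $|x-y|+|\rho(x)-\rho(y)|$, not in $|x-y|$ alone, so your first displayed inequality should also carry the $\Psi$-argument. Once the shrinking compacts and the $j$-dependent constant $C_j$ are tracked, your scheme becomes the natural one and the hypotheses fit into place exactly where you have a gap.
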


Therefore it only remains to show that, given a function $u\in \mathcal{K}_f$, the sequence of iterates $\{\T_\rho^k u\}_k$
is equicontinuous at each $\T_\rho$-regular point of the boundary.

\begin{proposition}\label{THM-BOUNDARY-EQUICONTINUITY}
Let $\Omega\subset\R^n$ be a bounded domain and $f\in C(\partial\Omega)$. For any $u\in \mathcal{K}_f$, the sequence of
iterates $\{\T_\rho^k u\}_k$ is equicontinuous at each $\T_\rho$-regular point of $\partial\Omega$.
\end{proposition}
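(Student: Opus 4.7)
The plan is to exploit the $\T_\rho$-barrier $w_\xi$ at $\xi$ (available by $\T_\rho$-regularity, in particular from \Cref{mainthmloc}) together with the monotonicity and affine invariance of $\T_\rho$, plus the fact that $\T_\rho$ preserves boundary values.

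Fix $\varepsilon>0$. Since $u\in \mathcal{K}_f\subset C(\overline\Omega)$ and $u(\xi)=f(\xi)$, there exists $\delta_0>0$ such that $|u(x)-f(\xi)|<\varepsilon/2$ for $x\in\overline\Omega$ with $|x-\xi|<\delta_0$. By continuity and strict positivity of $w_\xi$ on the compact set $\overline\Omega\setminus B(\xi,\delta_0)$, the quantity $m_0:=\inf\{w_\xi(x):x\in\overline\Omega,\,|x-\xi|\geq\delta_0\}$ is strictly positive. Choose $C>0$ so large that $Cm_0\geq 2\|f\|_\infty$, and define the comparison functions
$$V_\pm(x)=f(\xi)\pm\pare{\tfrac{\varepsilon}{2}+Cw_\xi(x)}.$$
Using $w_\xi\geq 0$ together with the two estimates above (continuity of $u$ near $\xi$, and the $L^\infty$ bound $|u-f(\xi)|\leq 2\|f\|_\infty$ far from $\xi$), one checks that $V_-\leq u\leq V_+$ on $\overline\Omega$.

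Next, by the affine invariance of $\T_\rho$ and the super-solution property $\T_\rho w_\xi\leq w_\xi$, I get $\T_\rho V_+\leq V_+$ and, symmetrically, $\T_\rho V_-\geq V_-$. Induction based on the monotonicity of $\T_\rho$ then yields $V_-\leq \T_\rho^k u\leq V_+$ on $\overline\Omega$ for every $k\geq 0$; equivalently,
$$|\T_\rho^k u(x)-f(\xi)|\leq \tfrac{\varepsilon}{2}+Cw_\xi(x).$$
Because $w_\xi$ is continuous at $\xi$ with $w_\xi(\xi)=0$, I can pick $\delta\in(0,\delta_0]$ such that $Cw_\xi(x)<\varepsilon/2$ whenever $|x-\xi|<\delta$. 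Since $\T_\rho^k u(\xi)=u(\xi)=f(\xi)$ (boundary values are preserved by $\T_\rho$), this yields $|\T_\rho^k u(x)-\T_\rho^k u(\xi)|<\varepsilon$ uniformly in $k$ for every $x\in\overline\Omega\cap B(\xi,\delta)$, which is the claimed equicontinuity at $\xi$.

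I do not anticipate a serious obstacle: every ingredient — existence of a $\T_\rho$-barrier, affine invariance, monotonicity, preservation of boundary values, and the $L^\infty$ bound on elements of $\mathcal{K}_f$ — is already in place. The only slightly delicate point is the verification that $V_\pm$ sandwich $u$ on the whole of $\overline\Omega$, which requires a case split between a neighborhood of $\xi$ (where the continuity of $u$ at the boundary point controls $|u-f(\xi)|$) and its complement (where the positive lower bound on $w_\xi$ absorbs the $L^\infty$ discrepancy). Everything else is a clean monotone iteration against a stationary super/sub-solution.
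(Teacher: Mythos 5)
Your proof is correct and follows essentially the same strategy as the paper's: construct stationary super/sub-solutions of the form $f(\xi)\pm(\text{small constant}+Cw_\xi)$ that sandwich $u$, then propagate the sandwich through the iteration using monotonicity and affine invariance, and conclude by sending $x\to\xi$. There is one noteworthy (and in your favor) difference in how the constant $C$ is chosen: the paper invokes the explicit lower bound $\L(|x-\xi|)\leq w_\xi(x)$ from \eqref{lower-upper-control}, which holds only for the particular barriers built in \Cref{mainthmloc}, whereas you derive a positive lower bound on $w_\xi$ away from $\xi$ purely from compactness and the strict positivity of $w_\xi$ on $\overline\Omega\setminus\{\xi\}$. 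Since the proposition is stated for an arbitrary $\T_\rho$-regular boundary point of an arbitrary bounded domain, your version is actually the one that matches the stated level of generality, while the paper's reliance on $\L$ implicitly assumes the barrier comes from the exterior cone construction.
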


\begin{proof}
Since $u\in \mathcal{K}_f$, then $u$ is uniformly continuous in $\overline\Omega$, that is, for each $\eta>0$ there exists
small enough $\delta>0$ such that $|u(x)-u(y)|<\eta$ for every $x,y\in\overline\Omega$ satisfying $|x-y|<\delta$.
Fix $C=C_{u,\eta}=2\|u\|_\infty/\L(\delta)$, where $\L$ is the non-decreasing continuous function defined in \eqref{def L}.
Then
\begin{equation*}
    |u(x)-u(y)|
    \leq
    C\L(|x-y|)+\eta
\end{equation*}
for every $x,y\in\overline\Omega$. Therefore, if
$\xi\in\partial\Omega$ is  $\T_\rho$-regular we obtain, recalling
\eqref{lower-upper-control}, that
\begin{equation*}
                |u(x)-f(\xi)|
                \leq
                Cw_\xi(x)+\eta
\end{equation*}
 for every $x\in\overline\Omega$, where $w_\xi$ is a $\T_\rho$-barrier at $\xi$.
 Let $k\in\N$. By the affine invariance and the monotonicity of $\T_\rho^k$,

\begin{equation*}
    \big|\T_\rho^k u(x)-f(\xi)\big|
    =
    \big|\T_\rho^k\big(u-f(\xi)\big)(x)\big|
    \leq
    \T_\rho^k\big(Cw_\xi+\eta\big)(x)
    \leq
    Cw_\xi(x)+\eta
\end{equation*}
for every $x\in\overline\Omega$, where in the second inequality we have used that $w_\xi\geq\T_\rho w_\xi$.
Finally, taking limits it turns out that
\begin{equation*}
                0
                \leq
                \limsup_{x\to\xi}\big|\T_\rho^ku(x)-f(\xi)\big|
                \leq
                C\limsup_{x\to\xi}w_\xi(x)+\eta
                =
                \eta
\end{equation*}
for each $k\in\N$ and every $\eta>0$. Thus the sequence of iterates $\{\T_\rho^ku\}_k$ is equicontinuous at $\xi$
and the proof is finished.
\end{proof}

\begin{remark}
The proof of the above result only requires the affine invariance
and the monotonicity of $\T_\rho$. On the other hand, the proof does
not require any assumption on the admissible radius function. Thus,
the equicontinuity estimates obtained in the previous result are
independent  of the particular choice of $\rho$ in the definition of
the operator $\T_\rho$.
\end{remark}

In view of \Cref{THM-LOCAL-EQUICONTINUITY} and \Cref{THM-BOUNDARY-EQUICONTINUITY}, we have proved the following.

\begin{theorem}\label{THM-GLOBAL-EQUICONTINUITY}
Under the assumptions in \Cref{THM-LOCAL-EQUICONTINUITY}, assume in addition that $\Omega$ is $\T_\rho$-regular. If  $u\in \mathcal{K}_f$, then the sequence of iterates $\{\T_\rho^k u\}_k$ is equicontinuous in $\overline\Omega$.
\end{theorem}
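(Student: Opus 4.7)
The statement combines the two already-established equicontinuity results (interior via Theorem~\ref{THM-LOCAL-EQUICONTINUITY}, boundary via Proposition~\ref{THM-BOUNDARY-EQUICONTINUITY}) into one global statement on the compact set $\overline\Omega$. Since pointwise equicontinuity on a compact set together with local uniform equicontinuity upgrades to uniform equicontinuity by a standard patching argument, the plan is to make that argument explicit.

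My plan is as follows. Fix $\eta>0$. First, I invoke Proposition~\ref{THM-BOUNDARY-EQUICONTINUITY}: for each $\xi\in\partial\Omega$ (which is $\T_\rho$-regular by assumption), I obtain $r_\xi>0$ such that $|\T_\rho^k u(x)-f(\xi)|<\eta/3$ for every $x\in B(\xi,r_\xi)\cap\overline\Omega$ and every $k\in\N$. By the continuity of $f$, I may shrink $r_\xi$ to also guarantee $|f(\xi')-f(\xi)|<\eta/3$ for $\xi'\in B(\xi,r_\xi)\cap\partial\Omega$. By compactness of $\partial\Omega$, I extract a finite subcover $\{B(\xi_i,r_{\xi_i}/2)\}_{i=1}^N$, and set $\delta_1:=\min_i r_{\xi_i}/2>0$.

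Next, I define the compact ``interior'' set $K:=\{x\in\overline\Omega:\dist(x,\partial\Omega)\geq\delta_1/8\}\subset\Omega$. By Theorem~\ref{THM-LOCAL-EQUICONTINUITY}, the sequence $\{\T_\rho^ku\}_k$ is locally uniformly equicontinuous in $\Omega$, hence uniformly equicontinuous on the compact set $K$. This yields $\delta_2>0$ such that $x,y\in K$ and $|x-y|<\delta_2$ imply $|\T_\rho^k u(x)-\T_\rho^k u(y)|<\eta$ for every $k$.

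Finally, I set $\delta:=\min\{\delta_1/8,\delta_2\}$ and verify the two cases. If $x,y\in\overline\Omega$ with $|x-y|<\delta$ and $\dist(x,\partial\Omega)\geq\delta_1/4$, then both points lie in $K$, and the interior estimate from Theorem~\ref{THM-LOCAL-EQUICONTINUITY} gives the bound $\eta$. If instead $\dist(x,\partial\Omega)<\delta_1/4$, then there is $\xi'\in\partial\Omega$ with $|x-\xi'|<\delta_1/4$; choosing $\xi_j$ with $\xi'\in B(\xi_j,r_{\xi_j}/2)$ and using $|x-y|<\delta\leq\delta_1/8$, a triangle-inequality check shows both $x,y\in B(\xi_j,r_{\xi_j})\cap\overline\Omega$, and then
\[
|\T_\rho^k u(x)-\T_\rho^k u(y)|\leq|\T_\rho^k u(x)-f(\xi_j)|+|f(\xi_j)-\T_\rho^k u(y)|<\tfrac{2\eta}{3}<\eta
\]
uniformly in $k$. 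Hence $\{\T_\rho^k u\}_k$ is uniformly equicontinuous on $\overline\Omega$.

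The only mildly delicate point is the transition region between ``near-boundary'' and ``interior'' behaviour, which is why the choice of $\delta$ is coupled to both $\delta_1$ (chosen first, from the boundary estimates) and $\delta_2$ (chosen afterwards, for a compact set whose definition depends on $\delta_1$). The key observation is that the boundary estimate supplied by Proposition~\ref{THM-BOUNDARY-EQUICONTINUITY} is \emph{uniform in $k$} thanks to the $\T_\rho$-barrier $w_\xi$ and the monotonicity/affine invariance of $\T_\rho^k$, so no further work is required beyond this straightforward covering argument.
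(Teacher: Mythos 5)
Your proof is correct and amounts to making explicit the standard ``pointwise equicontinuity on a compact set implies uniform equicontinuity'' patching argument, which the paper treats as immediate (it states the result with no proof, just the words ``In view of \Cref{THM-LOCAL-EQUICONTINUITY} and \Cref{THM-BOUNDARY-EQUICONTINUITY}, we have proved the following''). The only stray detail is the step where you shrink $r_\xi$ to guarantee $|f(\xi')-f(\xi)|<\eta/3$: it is never invoked in the final case analysis (you pass through $f(\xi_j)$ directly rather than through a nearby boundary point), so it is harmless but unnecessary.
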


\subsection{Existence and uniqueness}

We start with the following comparison principle, that uses a standard argument (see also \cite[Proposition 4.1]{ARR-LLO-16-1}).

\begin{proposition}\label{THM-COMPARISON-PPLE}
Let $\Omega\subset\R^n$ be a bounded domain and $\rho$ an admissible radius function in $\Omega$. Assume that
$u,v\in C(\overline\Omega)$ satisfy $u\leq\T_\rho u$, $v\geq\T_\rho v$ in $\Omega$ and $u\leq v$ on $\partial\Omega$.
Then $u\leq v$ in $\Omega$.
\end{proposition}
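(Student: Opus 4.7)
The plan is to argue by contradiction via a standard ``open-closed'' propagation of the maximum. Set $w := u - v \in C(\overline\Omega)$ and $M := \max_{\overline\Omega} w$, and suppose for contradiction that $M > 0$. Since $w \leq 0$ on $\partial\Omega$, the contact set
\[
    E := \{x \in \overline\Omega : w(x) = M\}
\]
is a nonempty closed subset of $\overline\Omega$ that misses $\partial\Omega$; in particular $E \subset \Omega$, and $E$ is closed in $\Omega$. The goal will be to show that $E$ is also open in $\Omega$, so that by connectedness $E = \Omega$, and then by continuity $w \equiv M$ on $\overline\Omega$, contradicting $M > 0 \geq w\big|_{\partial\Omega}$.

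To show that $E$ is open, fix $x_0 \in E$ and note $\rho(x_0) > 0$ because $x_0 \in \Omega$. The hypotheses give
\[
    M = u(x_0) - v(x_0) \leq \T_\rho u(x_0) - \T_\rho v(x_0).
\]
On the other hand, since $u \leq v + M$ everywhere on $\overline\Omega$, one has $\sup_{B_\rho(x_0)} u \leq \sup_{B_\rho(x_0)} v + M$, $\inf_{B_\rho(x_0)} u \leq \inf_{B_\rho(x_0)} v + M$, and $\dashint_{B_\rho(x_0)} u \leq \dashint_{B_\rho(x_0)} v + M$. Taking the convex combination \eqref{Trho} (whose weights $\tfrac{p-2}{n+p}$ and $\tfrac{n+2}{n+p}$ are nonnegative and sum to $1$) yields $\T_\rho u(x_0) - \T_\rho v(x_0) \leq M$. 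Hence equality must hold throughout the chain of inequalities.

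The main (and only slightly delicate) step is to extract from this equality the information that $w \equiv M$ on the whole ball $B_\rho(x_0)$. Since $\tfrac{n+2}{n+p} > 0$, equality forces in particular
\[
    \dashint_{B_\rho(x_0)} (u - v)\,dy \;=\; M,
\]
and combined with the pointwise bound $u - v \leq M$ this implies $w = M$ almost everywhere, hence everywhere on $B_\rho(x_0)$, by continuity of $w$. Therefore $B_\rho(x_0) \subset E$, which proves that $E$ is open in $\Omega$. Since $\Omega$ is a (connected) domain and $E \subset \Omega$ is nonempty, open and closed, we conclude $E = \Omega$, and then $w \equiv M$ on $\overline\Omega$ by continuity, contradicting $w \leq 0 < M$ on $\partial\Omega$. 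Thus $M \leq 0$, i.e.\ $u \leq v$ in $\Omega$.

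I do not expect any serious obstacle: the only place that requires minor care is verifying that the weights of $\T_\rho$ allow one to isolate the mean-value term (which is automatic because $\tfrac{n+2}{n+p} > 0$ for all $p \in [2,\infty)$), and checking that $E$ does not touch $\partial\Omega$, which follows immediately from the boundary inequality and the assumption $M > 0$.
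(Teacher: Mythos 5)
Your proof is correct and follows essentially the same open--closed connectedness argument as the paper: both pass to the contact set of the maximum of $u-v$, use the hypotheses to sandwich $\T_\rho u(x_0)-\T_\rho v(x_0)$ between $M$ and $M$, isolate the mean-value term thanks to $\tfrac{n+2}{n+p}>0$ (this is exactly what the paper's phrase ``since $p\in[2,\infty)$'' is doing), and then propagate $u-v\equiv M$ across $B_\rho(x_0)$ by the averaging equality. The only cosmetic difference is that you make the ``convex combination of $\leq$'s'' step explicit where the paper invokes monotonicity of $\S_\rho$ and $\M_\rho$; the substance is identical.
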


\begin{proof}
Let $m=\max_{\overline\Omega}(u - v)$. We show that $m\leq 0$ by contradiction: suppose that $m>0$ and let
$A:\,=\set{x\in \Omega}{u(x)-v(x)=m}$. Since $u-v$ is upper semicontinuous in $\overline\Omega$ and $u-v\leq 0$
on the boundary, then $A$ is a nonempty closed subset of $\Omega$. The contradiction will then follow by proving that $A$
is also open, so $A=\Omega$ and $u(x)-v(x)=m>0$ for every $x\in\Omega$.

To see that $A$ is open, we choose any $a\in A$ and we show that
$B_\rho(a)\subset A$. Recalling that $u$ and $v$ are sub and
super-solutions of $\T_\rho$ we obtain that
\begin{equation*}
                \T_\rho u(a)
                \geq
                u(a)
                =
                m+v(a)
                \geq
                m+\T_\rho v(a),
\end{equation*}
and by the definition of $\T_\rho$,
\begin{equation*}
                \frac{p-2}{n+p}\,\S_\rho u(a)+\frac{n+2}{n+p}\,\M_\rho u(a)
                \geq
                \frac{p-2}{n+p}\,\big(m+\S_\rho v(a)\big)+\frac{n+2}{n+p}\,\big(m+\M_\rho v(a)\big).
\end{equation*}
Hence, by the monotonicity of $\S_\rho$ and $\M_\rho$, and since $p\in[2,\infty)$, it turns out that
\begin{equation*}
                \M_\rho u(a)
                =
                m+\M_\rho v(a),
\end{equation*}
and recalling the definition of $\M_\rho$,
\begin{equation*}
                m
                =
                \dashint_{B_\rho(a)} (u-v).
\end{equation*}
Since $m$ is defined as the maximum in $\overline\Omega$ of $u-v$, then $u(x)-v(x)=m$ for every $x\in B_\rho(a)$.
Then $B_\rho(a)\subset A$, and so $A$ is an open set. Therefore, since $\Omega$ is connected, $A=\Omega$ and $u-v\equiv m>0$
in $\Omega$, which contradicts the assumption $u\leq v$ on $\partial \Omega$.
\end{proof}

Uniqueness of fixed points follows immediately as a corollary.

\begin{corollary}\label{THM-UNIQUENES}
Let $\Omega\subset\R^n$ be a bounded domain and  $f\in C(\partial\Omega)$.  Suppose that
$u,v\in \mathcal{K}_f$ are fixed points of $\T_\rho$. Then $u=v$ in
$\overline\Omega$.
\end{corollary}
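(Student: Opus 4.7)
The plan is to deduce uniqueness as a direct two-line application of the Comparison Principle (\Cref{THM-COMPARISON-PPLE}), which is the hard work already done just above. Since both $u$ and $v$ are fixed points of $\T_\rho$ in $\Omega$, each of them satisfies the two one-sided inequalities $u\leq\T_\rho u$, $u\geq\T_\rho u$ trivially, and likewise for $v$. On $\partial\Omega$ both functions equal $f$ by the definition of $\mathcal{K}_f$, so in particular $u=v$ on $\partial\Omega$.

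I would then apply \Cref{THM-COMPARISON-PPLE} twice. First, treating $u$ as the subsolution ($u\leq\T_\rho u$) and $v$ as the supersolution ($v\geq\T_\rho v$), with $u=v=f$ on $\partial\Omega$, the proposition yields $u\leq v$ in $\Omega$. Second, swapping the roles of $u$ and $v$ (which is legitimate because both play each role), I obtain $v\leq u$ in $\Omega$. Combining the two inequalities gives $u=v$ in $\Omega$, and together with the boundary identity this extends to $u=v$ in $\overline\Omega$.

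There is essentially no obstacle here: the corollary is purely formal once the Comparison Principle is in hand, since a fixed point is simultaneously a sub- and a supersolution. The only point worth mentioning explicitly is the boundary matching, which is guaranteed by membership in $\mathcal{K}_f$ rather than by any property of $\T_\rho$ itself.
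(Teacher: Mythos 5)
Your argument is correct and is exactly what the paper has in mind: the paper states that uniqueness "follows immediately as a corollary" of \Cref{THM-COMPARISON-PPLE}, and applying the Comparison Principle twice with the roles of $u$ and $v$ swapped is the intended two-line deduction. Nothing is missing.
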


In order to show the existence of fixed points for $\T_\rho$ in $\mathcal{K}_f$, we will make use of the following
technical result, which can be stated in the more general context of Banach spaces.

\begin{lemma}\label{Banach}
Let $(X,\norm{\cdot})$ be a Banach space, $\emptyset\neq K\subset X$ any closed subset and $T:K\rightarrow K$ a
non-expansive operator. Fix $x\in K$. If $y\in K$ is any limit point of the sequence $\{T^k x\}_k$ then
\begin{equation}\label{asymptotic-regular-C}
    \lim_{k\rightarrow\infty}\|T^{k+1}x-T^k x\|
    =
    \|Ty-y\|.
\end{equation}
\end{lemma}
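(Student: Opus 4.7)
The plan is to reduce the claim to two easy facts: the sequence $a_k := \|T^{k+1}x - T^k x\|$ is monotone (hence convergent), and along the subsequence selecting the limit point $y$, it converges to $\|Ty - y\|$ by continuity of $T$.

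First, I would observe that non-expansiveness gives monotonicity of $a_k$:
\begin{equation*}
    a_{k+1}
    =
    \|T(T^{k+1}x) - T(T^k x)\|
    \leq
    \|T^{k+1}x - T^k x\|
    =
    a_k.
\end{equation*}
Since $a_k \geq 0$, the sequence $\{a_k\}_k$ converges to some limit $L \geq 0$. So it suffices to identify this limit.

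Next, since $y\in K$ is a limit point of $\{T^k x\}_k$, there exists a subsequence $\{T^{k_j}x\}_j$ converging to $y$. Because $T$ is non-expansive it is in particular continuous (indeed $1$-Lipschitz), so $T^{k_j+1}x = T(T^{k_j}x) \to Ty$ as $j\to\infty$. Applying continuity of the norm,
\begin{equation*}
    a_{k_j}
    =
    \|T^{k_j+1}x - T^{k_j}x\|
    \xrightarrow[j\to\infty]{}
    \|Ty - y\|.
\end{equation*}
Since the full sequence $\{a_k\}_k$ converges to $L$, every subsequence must converge to the same limit, so $L = \|Ty - y\|$, which is exactly \eqref{asymptotic-regular-C}.

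I do not expect a serious obstacle here, since the argument only uses non-expansiveness (for monotonicity and continuity) and the defining property of a limit point. The proof is essentially a packaging of the classical asymptotic regularity observation; completeness of $X$ is not actually used in the argument (the closedness of $K$ and the existence of a limit point $y \in K$ are what matter for the conclusion to make sense).
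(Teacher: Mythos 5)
Your proof is correct and follows essentially the same route as the paper: establish that $a_k = \|T^{k+1}x - T^k x\|$ is non-increasing (hence convergent) by non-expansiveness, then identify the limit along the subsequence converging to $y$. The paper merely bundles your two continuity steps into a single estimate, $\big|\,\|T^{k_j+1}x - T^{k_j}x\| - \|Ty-y\|\,\big| \leq 2\|T^{k_j}x - y\|$, via the reverse triangle inequality and non-expansiveness, but the content is the same, and your observation that completeness plays no role is accurate.
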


\begin{proof}
Observe first that, since $T$ is non-expansive, the sequence of non-negative real numbers $\{\|T^{k+1}x-T^k x\|\}_k$ is
non-increasing, and thus every subsequence converges to the same limit. Next, take any convergent subsequence
$\displaystyle \{T^{k_j} x\}_j$ and denote the limit by $y\in K$.  The triangle inequality and the non-expansiveness of $T$ yield
\begin{equation*}
\begin{split}
\Big|\|T^{k_j+1}x-T^{k_j}x\|-\|Ty-y\|\Big|
                \leq
                ~&
\|(T^{k_j+1}x-Ty)-(T^{k_j}x-y)\|
                \\
                \leq
                ~&
                2\|T^{k_j}x-y\|,
\end{split}
\end{equation*}
for each $j\in\N$. Then \eqref{asymptotic-regular-C} follows after taking limits as $j\to\infty$.
\end{proof}

Now we are ready to prove \Cref{MAIN-THM-EXISTENCE}.

\begin{proof}[Proof of \Cref{MAIN-THM-EXISTENCE}]
Since $\Omega$ is $\T_\rho$-regular by assumption,  the sequence of
iterates $\{\T_\rho^k u\}_k$ is equicontinuous at each point in
$\overline\Omega$ for any $u\in \mathcal{K}_f$, by
\Cref{THM-GLOBAL-EQUICONTINUITY}. Then the Arzel\`{a}-Ascoli's
theorem yields the existence of at least one subsequence converging
uniformly to a function $v\in \mathcal{K}_f$. Furthermore,
$\T_\rho^\ell v$ is also a limit point of $\{\T_\rho^k u\}_k$ for
each $\ell=0,1,2,\ldots$ Therefore, since $\mathcal{K}_f$ is a
closed subset of $C(\overline\Omega)$ and
$\T=\T_\rho:\mathcal{K}_f\rightarrow \mathcal{K}_f$ is
non-expansive, \Cref{Banach} implies that
\begin{equation}\label{norm-C}
                \|\T^{\ell+1} v-\T^\ell v\|_\infty
                =
                \lim_{k\rightarrow\infty}\|\T^{k+1}u-\T^k u\|_\infty
                =\,:
                d
                \geq 0
\end{equation}
for every $\ell=0,1,2,\ldots$ In consequence, if $d=0$ we have in particular that $\T_\rho v=v$, so $v$ is a fixed point
of $\T_\rho$.

In order to show that actually $d=0$, let us assume on the contrary that $d>0$ and argue by contradiction. Let $\ell\in\N$
to be fixed later. Since $\T^{\ell+1} v-\T^\ell v$ is a continuous function vanishing on $\partial\Omega$, we can choose an
interior point $x_0\in\Omega$ such that
\begin{equation*}
                |\T^{\ell+1} v(x_0)-\T^\ell v(x_0)|
                =
                d.
\end{equation*}
We assume that $\T^{\ell+1} v(x_0)-\T^\ell v(x_0)=d$ since otherwise the proof goes in an analogous way. Recalling the
definition of $\T=\T_\rho$ and $\M=\M_\rho$, it turns out that
\begin{equation}\label{C-opT}
    d
    =
    \frac{p-2}{n+p}\brackets{\S(\T^\ell v)(x_0)-\S(\T^{\ell-1} v)(x_0)}+\frac{n+2}{n+p}\,\M\big(\T^\ell v-\T^{\ell-1} v\big)(x_0).
\end{equation}

From \eqref{norm-C} and the non-expansiveness of $\S$ and $\M$, it
follows that
\begin{equation*}
    \S(\T^\ell v)(x_0)-\S(\T^{\ell-1} v)(x_0)
    \leq
    d
    \quad \text{ and } \quad
    \M\big(\T^\ell v-\T^{\ell-1} v\big)(x_0)
    \leq
    d,
\end{equation*}
which, together with \eqref{C-opT}, implies that $\M(\T^\ell v-\T^{\ell-1} v)(x_0)=d$.

Equivalently,
\begin{equation*}
                \dashint_{B_\rho(x_0)}\big(\T^\ell v(y)-\T^{\ell-1}v(y)\big)\ dy
                =
                d,
\end{equation*}
and by \eqref{norm-C}, the integrand
 must be equal to $d$ in $B_\rho(x_0)$. In particular, $\T^\ell v(x_0)-\T^{\ell-1} v(x_0) = d $ so we can repeat
 this argument iteratively until we finally get
that
\begin{equation*}
                \T^\ell v(x_0)
                =
                v(x_0)+\ell d.
\end{equation*}
Recalling \eqref{Kf} and  that
$\T=\T_\rho:\mathcal{K}_f\rightarrow \mathcal{K}_f$
 we get
\begin{equation*}
                \|f\|_\infty
                \geq
                \T^\ell v(x_0)
                =
                v(x_0)+\ell d
                \geq
                -\|f\|_\infty+\ell d.
\end{equation*}
Hence, choosing $\ell$ such that
\begin{equation*}
                \ell
                >
                \frac{2\|f\|_\infty}{d}
\end{equation*}
we obtain the desired contradiction.

Finally, to see that the sequence of iterates $\{\T_\rho^k u\}_k$
actually converges uniformly to the unique fixed point $v\in
\mathcal{K}_f$, suppose on the contrary that there exist $\eta>0$ and a subsequence
$\{\T_\rho^{k_j}u\}_j$ such that $
                \|\T_\rho^{k_j}u-v\|_\infty
                \geq
                \eta
$ for each $j\in\N$. We can assume that this subsequence converges
uniformly to a function $w\in \mathcal{K}_f$ (otherwise, by
equicontinuity and Arzel\`{a}-Ascoli's theorem we could take a
further subsequence), which would
be a limit point of $\{\T_\rho^ku\}_k$ in $\mathcal{K}_f$, and thus
a fixed point of $\T_\rho$. Then the contradiction follows by
uniqueness and the fact that $\|w-v\|_\infty\geq\eta$.
\end{proof}

\section{Convergence to $p$-harmonic functions} \label{sec.convergence}

In this section we study the convergence of solutions $u_\rho$ to \eqref{DP} as the admissible radius function converges to zero in $\Omega$. Before moving into details,
it is worth to recall that one of the main connections between mean
value properties and $p$-harmonic functions arises from the
asymptotic expansion for the $p$-laplacian of a twice-differentiable
function $\phi$ at a non-critical point $x$. This expansion can be
expressed in terms of the average operator $\T_\rho$ as follows
\begin{equation*}
                \T_\rho\phi(x)
                =
                \phi(x)
                +\frac{\rho(x)^2}{2(n+p)}\,\Delta^N_p\phi(x)+o(\rho(x)^2)
                \qquad
                (\rho(x)\to 0),
\end{equation*}
where $\Delta_p^N\phi$ stands for the \emph{normalized $p$-laplacian} of $\phi$ defined as
\begin{equation*}
    \Delta_p^N\phi
    :\,=
    \Delta\phi + (p-2)\frac{\Delta_{\infty}\phi}{|\nabla \phi |^2}.
\end{equation*}

Heuristically speaking, if the fixed points $\T_\rho u_\rho=u_\rho$
converge to a function $u_0$ as $\rho\to 0$, then it is reasonable
to expect that this function is $p$-harmonic. Indeed, this is one of
the key ideas required in the proof of \Cref{MAIN-THM-CONVERGENCE}.

To this end, we need first to impose appropriate conditions in order
to ensure that $\rho (x)$
converges to zero in a uniform way. Given a bounded domain
$\Omega\subset\R^n$, let us consider a collection of continuous
admissible radius functions $\{\rho_\varepsilon\}_{0<\varepsilon\leq
1}$ satisfying
\begin{equation}\label{lambdas-epsilon}
                \lambda\dist(x,\partial\Omega)^\beta
                \leq
                \frac{\rho_\varepsilon(x)}{\varepsilon}
                \leq
                \Lambda\dist(x,\partial\Omega)
\end{equation}
for all $x\in\Omega$ and every $0<\varepsilon\leq 1$, where $\beta$,
$\lambda$ and $\Lambda$ are as in \eqref{ctts}. Since $\Omega$ is
bounded,  $\|\rho_\varepsilon\|_\infty$ decreases as fast as, at
least, a constant multiple of $\varepsilon$. In consequence
$\rho_\varepsilon(x)=O(\varepsilon )$ uniformly for every $x\in\Omega$, and the asymptotic expansion for
 $\T_{\rho_\varepsilon}$ becomes
\begin{equation}\label{taylor}
                \T_{\rho_\varepsilon}\phi(x)
                =
                \phi(x)
                +\frac{\varepsilon^2}{2(n+p)}\bigg(\frac{\rho_\varepsilon(x)}{\varepsilon}\bigg)^2\Delta^N_p\phi(x)+o(\varepsilon^2)
                \qquad
                (\varepsilon\to 0)
\end{equation}
for every $x\in\Omega$.

On the other hand, $\rho_\varepsilon$ is an admissible radius
function satisfying the hypothesis of \Cref{MAIN-THM-EXISTENCE} for
each $0<\varepsilon\leq 1$. Therefore, assuming that $\Omega$
satisfies the uniform exterior cone condition,
\Cref{MAIN-THM-EXISTENCE} yields, for any fixed $f\in
C(\partial\Omega)$, a function $u_\varepsilon\in C(\overline\Omega)$
satisfying
\begin{equation}\label{rhoeps}
                \begin{cases}
                \T_{\varepsilon} u_\varepsilon=u_\varepsilon & \mbox{ in } \Omega,
                \\
                u_\varepsilon=f & \mbox{ on } \partial\Omega,
                \end{cases}
\end{equation}
for each $0<\varepsilon\leq 1$, where $\T_{\varepsilon} :\,=
\T_{\rho_{\varepsilon}}$.
\\

The strategy to prove \Cref{MAIN-THM-CONVERGENCE} is inspired by the
method for convergence of numerical
schemes established by Barles and Souganidis in the 90's
(\cite{BAR-SOU}) and in a more recent result by del Teso, Manfredi
and Parviainen on the convergence of dynamic programming principles
for the $p$-laplacian (\cite{DEL-MAN-PAR}). The steps in the proof
can be split into two parts. First, by taking pointwise limits as
$\varepsilon\to 0$, we define semicontinuous functions
\begin{equation}\label{pointwise-limits}
    \underline u(x)
    :\,=
    \liminf_{y\to x,  \ \varepsilon\to 0}u_\varepsilon(y)
    \leq
    \limsup_{y\to x,\ \varepsilon\to 0}u_\varepsilon(y)
    =\,:
    \overline u(x),
\end{equation}
and, using the asymptotic expansion \eqref{taylor}, we show that
$\underline u$ and $\overline u$ are $p$-superharmonic and
$p$-subharmonic, respectively. In a second part we prove that
$\overline u\leq \underline u$ in $\overline\Omega$ with the aid of
the comparison principle for $p$-subharmonic and $p$-superharmonic
functions \cite[Theorem 2.7]{JUU-LIN-MAN}, so both functions
coincide with $u_0$, the unique $p$-harmonic function satisfying
\begin{equation*}
                \begin{cases}
                \Delta_pu_0=0
                & \text{ in } \Omega,
                \\
                u_0=f
                & \text{ on } \partial\Omega.
                \end{cases}
\end{equation*}
\\

We remind that, by \cite[Theorem 2.5]{JUU-LIN-MAN}, the concepts of
$p$-subharmonic function and viscosity $p$-subsolution coincide. Hence, for the sake of
simplicity, we will use hereafter the viscosity characterization.

\begin{definition}
Let $ p\in [2, \infty]$.
\begin{enumerate}
\item We say that an upper semicontinuous function $u$ in $\Omega$
is \emph{$p$-subharmonic} if $u\not\equiv-\infty$ and for every $x\in\Omega$ and any $\phi\in C^2(\Omega)$ such that $\nabla\phi(x)\neq 0$ and $u-\phi<u(x)-\phi(x)=0$ in $\Omega\setminus\{x\}$ we have that $\Delta_p\phi(x)\geq 0$.
\item We say that a lower semicontinuous function $u$ in $\Omega$
is \emph{$p$-superharmonic} if $u\not\equiv\infty$ and for every $x\in\Omega$ and any $\phi\in C^2(\Omega)$ such that $\nabla\phi(x)\neq 0$ and $u-\phi>u(x)-\phi(x)=0$ in $\Omega\setminus\{x\}$ we have that $\Delta_p\phi(x)\leq 0$.
\item $u\in C(\Omega)$ is \emph{$p$-harmonic} if it is both $p$-subharmonic and $p$-superharmonic.
\end{enumerate}
\end{definition}

\begin{proposition}\label{THM-p-SUBHARMONIC}
Let $p\in [2, \infty )$, $\Omega\subset\R^n$ a bounded domain
satisfying the uniform exterior cone condition, $f\in
C(\partial\Omega)$ and $u_\varepsilon\in \mathcal{K}_f$  the unique
solution of \eqref{rhoeps} provided by \Cref{MAIN-THM-EXISTENCE},
for  $0<\varepsilon\leq 1$. Let $\underline u$ and $\overline u$ be
the functions defined in \eqref{pointwise-limits}. Then $\underline
u$ is $p$-superharmonic and $\overline u$ is $p$-subharmonic in
$\Omega$.
\end{proposition}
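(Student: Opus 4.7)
The proof proposal follows the standard Barles--Perthame / Barles--Souganidis half-relaxed limit strategy, adapted to the nonlocal operator $\T_\varepsilon$. By symmetry between $\underline u$ and $\overline u$, I will only sketch the argument for $\overline u$ being $p$-subharmonic; the argument for $\underline u$ is obtained by reversing every inequality. First, since all $u_\varepsilon$ lie in $\mathcal{K}_f$ we have $\|u_\varepsilon\|_\infty=\|f\|_\infty$ uniformly, so $\overline u$ takes finite values and is not identically $\pm\infty$. Upper semicontinuity of $\overline u$ is immediate from its definition as a $\limsup$ of the half-relaxed limit type.

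Fix $x_0\in\Omega$ and $\phi\in C^2(\Omega)$ with $\nabla\phi(x_0)\neq 0$ and $\overline u-\phi<\overline u(x_0)-\phi(x_0)=0$ in a punctured neighborhood of $x_0$. After replacing $\phi$ by $\phi+|x-x_0|^4$, we may assume the maximum is strict on a closed ball $\overline B(x_0,r)\subset\Omega$. I then invoke the classical lemma on half-relaxed limits: there exist sequences $\varepsilon_k\downarrow 0$ and $x_k\in B(x_0,r)$ with $x_k\to x_0$ and $u_{\varepsilon_k}(x_k)\to\phi(x_0)$ such that $u_{\varepsilon_k}-\phi$ attains its maximum over $\overline B(x_0,r)$ at the interior point $x_k$ (this follows from the strict maximality together with $\overline u(x_0)=\phi(x_0)$). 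Setting $c_k:=u_{\varepsilon_k}(x_k)-\phi(x_k)$, we then have
\begin{equation*}
u_{\varepsilon_k}(y)\leq \phi(y)+c_k\quad\text{for all } y\in \overline B(x_0,r).
\end{equation*}

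The crucial step uses the fixed-point equation \eqref{rhoeps} combined with the locality of $\T_\varepsilon$ at a point. Because $\rho_{\varepsilon_k}(x_k)\leq\Lambda\varepsilon_k\dist(x_k,\partial\Omega)\to 0$, for $k$ large enough $B(x_k,\rho_{\varepsilon_k}(x_k))\subset B(x_0,r)$, so monotonicity and affine invariance of $\T_{\varepsilon_k}$ applied to values inside this ball yield
\begin{equation*}
u_{\varepsilon_k}(x_k)=\T_{\varepsilon_k}u_{\varepsilon_k}(x_k)\leq \T_{\varepsilon_k}\phi(x_k)+c_k,
\end{equation*}
which simplifies to $\phi(x_k)\leq\T_{\varepsilon_k}\phi(x_k)$. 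Since $\nabla\phi(x_0)\neq 0$, the same holds in a neighborhood, so the asymptotic expansion \eqref{taylor} applies at $x_k$ and gives
\begin{equation*}
0\leq \frac{\varepsilon_k^2}{2(n+p)}\bigg(\frac{\rho_{\varepsilon_k}(x_k)}{\varepsilon_k}\bigg)^{\!2}\Delta_p^N\phi(x_k)+o(\varepsilon_k^2).
\end{equation*}
Dividing by $\varepsilon_k^2$, using \eqref{lambdas-epsilon} to bound $\rho_{\varepsilon_k}(x_k)/\varepsilon_k$ from below by $\lambda\dist(x_0,\partial\Omega)^\beta/2>0$ for large $k$, and letting $k\to\infty$ delivers $\Delta_p^N\phi(x_0)\geq 0$. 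Since $\nabla\phi(x_0)\neq 0$, this is equivalent to $\Delta_p\phi(x_0)\geq 0$, showing that $\overline u$ is $p$-subharmonic.

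The main subtleties I anticipate are twofold. First, one must verify that the error term $o(\varepsilon^2)$ in \eqref{taylor} is locally uniform in $x$, so that evaluating it along the sequence $x_k\to x_0$ remains a true $o(\varepsilon_k^2)$; this follows from the $C^2$ regularity of $\phi$ and a standard Taylor expansion inside the defining integrals of $\M_\rho$ and $\S_\rho$, but it is the step where one must be careful. Second, one must justify monotonicity of $\T_\varepsilon$ applied under a merely local pointwise inequality; this is legitimate because $\T_\varepsilon u(x_k)$ only sees values of $u$ in $B(x_k,\rho_{\varepsilon_k}(x_k))$, and it is here that the smallness of $\rho_{\varepsilon_k}(x_k)$ is essential. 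The argument for $\underline u$ being $p$-superharmonic is symmetric, using a strict minimum of $\underline u-\phi$ and reversing the inequalities throughout.
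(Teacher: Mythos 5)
Your proof follows essentially the same strategy as the paper's: extract along a sequence a maximum point $x_k\to x_0$ of $u_{\varepsilon_k}-\phi$, use the fixed-point equation together with monotonicity and affine invariance to obtain $\phi(x_k)\leq\T_{\varepsilon_k}\phi(x_k)$, substitute the asymptotic expansion \eqref{taylor}, and use the lower bound in \eqref{lambdas-epsilon} to conclude $\Delta_p^N\phi(x_0)\geq 0$. The one genuine variation is that you localize to a small ball $\overline B(x_0,r)\subset\Omega$ and invoke the half-relaxed limit lemma of Barles--Perthame, whereas the paper takes a global maximum $y_j$ of $u_{\varepsilon_j}-\phi$ over $\overline\Omega$ and verifies $y_j\to x$ by hand using the strict maximality of $\overline u-\phi$. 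Your local version is a technically cleaner move: it avoids the implicit assumption that $\phi\in C^2(\Omega)$ extends continuously to $\overline\Omega$, and it makes explicit the locality of $\T_\varepsilon$ (namely that $\T_{\varepsilon_k}u_{\varepsilon_k}(x_k)$ only sees $u_{\varepsilon_k}$ on $B(x_k,\rho_{\varepsilon_k}(x_k))$, which eventually sits inside $B(x_0,r)$). The two subtleties you flag -- locally uniform $o(\varepsilon^2)$ in \eqref{taylor}, and monotonicity applied under a merely local comparison -- are both real; the paper is silent on the first and uses the second implicitly. Two small remarks. Adding $|x-x_0|^4$ to $\phi$ is superfluous here, since the definition of $p$-subharmonicity already requires the maximum to be strict in $\Omega\setminus\{x_0\}$. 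More importantly, your final limit passage is compressed: from $\big(\rho_{\varepsilon_k}(x_k)/\varepsilon_k\big)^2\Delta_p^N\phi(x_k)\geq o(1)$, bounding $\rho_{\varepsilon_k}(x_k)/\varepsilon_k$ from below does not by itself give the conclusion unless you argue by contradiction assuming $\Delta_p^N\phi(x_0)<0$, so that for large $k$ the sign of $\Delta_p^N\phi(x_k)$ is definite and multiplying by the lower bound reverses the inequality in the useful direction -- this is exactly how the paper runs the argument, and the step should be spelled out.
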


\begin{proof}
We show that $\overline u$ is $p$-subharmonic. Fix any $x\in\Omega$ and $\phi\in C^2(\Omega)$ such that
$\nabla\phi(x)\neq 0$ and $\overline u-\phi<\overline u(x)-\phi(x)=0$ in $\Omega\setminus\{x\}$. That is, $x$ is a strict
global maximum of $\overline u-\phi$ in $\overline\Omega$. We need to check that $\Delta_p\phi(x)\geq 0$.

From the definition of $\overline u$, we pick sequences
$\varepsilon_j\to 0$ and $z_j\to x$ such that
$u_{\varepsilon_j}(z_j)\to\overline u(x)$. For each $j$, let
$y_j\in\overline\Omega$ such that $y_j$ is a maximum of
$u_{\varepsilon_j}-\phi$ with $\nabla\phi(y_j)\neq 0$. We claim that
$y_j\to x$ as $j\to\infty$. Otherwise, there would be a further
subsequence (still denoted by $\{y_j\}_j$) converging to $x'\neq x$.
Then
\begin{equation*}
    \overline u(x')-\phi(x')
    \geq
    \limsup_{j\to\infty}\big(u_{\varepsilon_j}(y_j)-\phi(y_j)\big)
    \geq
    \limsup_{j\to\infty}\big(u_{\varepsilon_j}(z_j)-\phi(z_j)\big)
    =
    \overline u(x)-\phi(x),
\end{equation*}
so we obtain a contradiction with the fact that $x$ is a strict global maximum of $\overline u-\phi$. Then $y_j\to x$ and $u_{\varepsilon_j}-u_{\varepsilon_j}(y_j)\leq\phi-\phi(y_j)$ in $\overline\Omega$. In consequence, by the monotonicity and the affine invariance of $\T_{\varepsilon_j}$ we obtain
\begin{equation*}
                0
                =
                \T_{\varepsilon_j}u_{\varepsilon_j}(y_j)-u_{\varepsilon_j}(y_j)
                \leq
                \T_{\varepsilon_j}\phi(y_j)-\phi(y_j)
\end{equation*}
for every $j\in\N$. Notice that the left-hand side of the inequality is equal to zero due to the fact that $\T_\varepsilon u_\varepsilon=u_\varepsilon$ for every $0<\varepsilon\leq 1$. Recall \eqref{taylor}, rearrange terms and divide by $\varepsilon_j^2$ to obtain
\begin{equation*}
                \bigg(\frac{\rho_{\varepsilon_j}(y_j)}{\varepsilon_j}\bigg)^2\Delta^N_p\phi(y_j)
                \geq
                o(1)
                \quad (j\to\infty).
\end{equation*}
Assume for a moment that $\Delta^N_p\phi(x)<0$. Then $\Delta^N_p\phi(y_j)<0$ for every large enough $j\in\N$, and recalling \eqref{lambdas-epsilon} we get
\begin{equation*}
                \big(\lambda\dist(y_j,\partial\Omega)^\beta\big)^2\Delta^N_p\phi(y_j)
                \geq
                o(1)
                \quad (j\to\infty).
\end{equation*}
Hence, taking limits as $j\to\infty$,
\begin{equation*}
                \big(\lambda\dist(x,\partial\Omega)^\beta\big)^2\Delta^N_p\phi(x)
                \geq
                0,
\end{equation*}
and thus $\Delta^N_p\phi(x)\geq 0$.
\end{proof}

In the next proposition we give a uniform boundary equicontinuity
estimate for $\{u_\varepsilon\}_{0<\varepsilon\leq 1}$. This
estimate is crucial to prove that the functions $\underline u$ and
$\overline u$ attach the right values near the boundary.

\begin{proposition}\label{THM-UNIFORM-EQUICONTINUITY}
Let $p\in [2, \infty)$,  $\Omega\subset\R^n$  a bounded domain
satisfying the uniform exterior cone condition (with constants
$\alpha$, $r$), $\gamma \in (0, \frac{1}{2}]$ as in \eqref{gamma}
and $f\in C(\partial \Omega )$.  Under the assumptions of
\Cref{MAIN-THM-CONVERGENCE}, let $u_\varepsilon\in \mathcal{K}_f$ be
the unique solution of \eqref{rhoeps}, for each $0<\varepsilon\leq
1$. Then for each $\eta>0$, there exists a constant $C>0$ depending
only on $\Omega$, $f$ and $\eta$ such that
\begin{equation}\label{regular-equicont}
                |u_\varepsilon(x)-f(\xi)|
                \leq
                C\gamma^{-2}|x-\xi|^\gamma+\eta,
\end{equation}
for every $x\in\overline\Omega$ and $\xi\in\partial\Omega$.
\end{proposition}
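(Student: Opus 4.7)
The plan is to run the barrier argument from the proof of Proposition \ref{THM-BOUNDARY-EQUICONTINUITY} with a fixed initial function in $\mathcal{K}_f$ (independent of $\varepsilon$) and then let the number of iterates tend to infinity, exploiting the fact that Theorem \ref{mainthmloc} produces a barrier $w_\xi$ that is simultaneously a $\T_{\rho,p}$-barrier for \emph{every} admissible radius function $\rho$. This is precisely what will make the resulting constant $C$ uniform in $\varepsilon$.

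First I would fix, once and for all, any $u\in\mathcal{K}_f$ (for instance a continuous norm-preserving extension of $f$ to $\overline\Omega$). By the uniform continuity of $u$ on $\overline\Omega$, given $\eta>0$ there is $\delta>0$ such that $|u(x)-u(y)|<\eta$ whenever $|x-y|<\delta$. Set
\begin{equation*}
    C \,:=\, \frac{2\|f\|_\infty}{\L(\delta)},
\end{equation*}
where $\L$ is the non-decreasing function from \eqref{def L}. Then, splitting into the cases $|x-y|<\delta$ and $|x-y|\geq\delta$ and using $\|u\|_\infty=\|f\|_\infty$ together with the monotonicity of $\L$, one checks that
\begin{equation*}
    |u(x)-u(y)| \,\leq\, C\,\L(|x-y|) + \eta \qquad\text{for all } x,y\in\overline\Omega.
\end{equation*}
In particular, for each $\xi\in\partial\Omega$ the lower bound in \eqref{lower-upper-control} yields
\begin{equation*}
    |u(x)-f(\xi)| \,\leq\, C\,w_\xi(x) + \eta \qquad\text{for all } x\in\overline\Omega,
\end{equation*}
where $w_\xi$ is the barrier provided by Theorem \ref{mainthmloc}, which does \emph{not} depend on $\varepsilon$.

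Next, for every $k\in\N$ and every $0<\varepsilon\leq 1$, the affine invariance and monotonicity of $\T_\varepsilon^k=\T_{\rho_\varepsilon,p}^k$, combined with the inequality $w_\xi\geq\T_\varepsilon w_\xi$ (which Theorem \ref{mainthmloc} guarantees for each admissible $\rho_\varepsilon$), give
\begin{equation*}
    \bigl|\T_\varepsilon^k u(x)-f(\xi)\bigr|
    \,=\, \bigl|\T_\varepsilon^k\bigl(u-f(\xi)\bigr)(x)\bigr|
    \,\leq\, \T_\varepsilon^k\bigl(Cw_\xi+\eta\bigr)(x)
    \,\leq\, C\,w_\xi(x) + \eta
\end{equation*}
for every $x\in\overline\Omega$ and every $\xi\in\partial\Omega$. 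By Theorem \ref{MAIN-THM-EXISTENCE}, the iterates $\T_\varepsilon^k u$ converge uniformly on $\overline\Omega$ to $u_\varepsilon$ as $k\to\infty$, so passing to the limit we obtain $|u_\varepsilon(x)-f(\xi)|\leq Cw_\xi(x)+\eta$. Finally, the upper bound in \eqref{lower-upper-control} gives $w_\xi(x)\leq\gamma^{-2}|x-\xi|^\gamma$ and \eqref{regular-equicont} follows.

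There is no serious obstacle here; the content of the statement is really the uniformity of $C$ in $\varepsilon$, and that is exactly what the $\varepsilon$-free barrier from Theorem \ref{mainthmloc} buys. The only subtlety worth double-checking is the passage to the limit in $k$: this is legitimate precisely because Theorem \ref{MAIN-THM-EXISTENCE} gives uniform convergence of $\{\T_\varepsilon^k u\}_k$ to $u_\varepsilon$ for the fixed initial datum $u\in\mathcal{K}_f$, so no additional $\varepsilon$-dependence is introduced through the choice of starting function. Note also that the constant $C$ depends on $f$ through $\|f\|_\infty$ and the modulus of continuity of the (fixed) extension $u$, on $\Omega$ through $\L$ (i.e.\ through $\alpha$, $r$ and $\gamma$), and on $\eta$ through $\delta=\delta(u,\eta)$, exactly as required.
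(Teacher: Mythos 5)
Your proof is correct and the conclusion follows, but you take a slightly longer route than the paper. The paper's proof does not iterate $\T_\varepsilon$ and pass to the limit in $k$: instead, it works only with $f$ on $\partial\Omega$, obtaining $|f(\zeta)-f(\xi)|\leq C\L(|\zeta-\xi|)+\eta\leq Cw_\xi(\zeta)+\eta$ for $\zeta,\xi\in\partial\Omega$, and then invokes the Comparison Principle (\Cref{THM-COMPARISON-PPLE}) directly: since $u_\varepsilon-f(\xi)$ and $f(\xi)-u_\varepsilon$ are fixed points of $\T_\varepsilon$ and $Cw_\xi+\eta$ is a $\T_\varepsilon$-supersolution that dominates them on the boundary, the inequality $|u_\varepsilon-f(\xi)|\leq Cw_\xi+\eta$ propagates to all of $\overline\Omega$ in one step. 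Your version reconstructs this by rerunning the barrier argument of \Cref{THM-BOUNDARY-EQUICONTINUITY} on the iterates $\T_\varepsilon^k u$ of a fixed extension $u\in\mathcal{K}_f$ and then letting $k\to\infty$ via the uniform convergence in \Cref{MAIN-THM-EXISTENCE}; that is logically sound, but it introduces a dependence of $\delta$ (hence $C$) on the modulus of continuity of the chosen extension $u$ rather than on $f$ alone, so you must (as you note) commit to a canonical extension to keep the dependence within $(\Omega,f,\eta)$. The paper's argument avoids this by staying on $\partial\Omega$ and uses only the modulus of continuity of $f$. Both proofs hinge on the same essential point, which you correctly identify: the barrier $w_\xi$ from \Cref{mainthmloc} is independent of $\varepsilon$, and that is precisely what makes $C$ uniform.
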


\begin{proof}

By uniform continuity, there exists small enough $\delta>0$ such
that $|f(\zeta)-f(\xi)|<\eta$ for every $\xi,\zeta\in\partial\Omega$
satisfying $|\zeta-\xi|<\delta$. Fix
$C=C_{f,\eta}=2\|f\|_\infty/\L(\delta)$, where $\L$ is the function
defined in \eqref{def L}. Then
\begin{equation*}
                |f(\zeta)-f(\xi)|
                \leq
                C\L(|\zeta-\xi|)+\eta
\end{equation*}
and from \eqref{lower-upper-control} it follows that
\begin{equation}\label{zeta xi eta}
                |f(\zeta)-f(\xi)|
                \leq
                Cw_\xi(\zeta)+\eta
\end{equation}
for every $\xi , \zeta\in\partial\Omega$, where $w_\xi$ is the
$\T_\rho$-barrier at $\xi$ defined in \eqref{Trho-barrier}. From the
fact that $w_{\xi}$ is a $\T_{\varepsilon}$-barrier at $\xi$
(\Cref{mainthmloc}) and the Comparison Principle (\Cref{THM-COMPARISON-PPLE}) it follows that
\begin{equation}
|u_{\varepsilon}(x) - f( \xi )| \leq Cw_{\xi}(x) + \eta
\end{equation}
for each $x\in \overline{\Omega}$ which, together with
\eqref{lower-upper-control}, implies \eqref{regular-equicont}.\qedhere
\end{proof}

\begin{remark}
If $u$ is the $p$-harmonic function in $\Omega$ with boundary data
$f$ then \eqref{regular-equicont} holds with $u_{\varepsilon}$
replaced by $u$. This follows from \eqref{zeta xi eta}, the fact
that $w_{\xi}$ is $p$-superharmonic  and the Comparison Principle
for the $p$-laplacian.

\end{remark}

\begin{proof}[Proof of \Cref{MAIN-THM-CONVERGENCE}]
We know that the semicontinuous functions $\underline u$ and
$\overline u$ defined in \eqref{pointwise-limits} satisfy
$\underline u\leq\overline u$ by construction. We claim that the theorem follows from the reverse inequality. Indeed, suppose that
$\underline u$ and $\overline u$ agree over the whole domain. This
allows to define a continuous function $u_0=\underline u=\overline
u\in \mathcal{K}_f$ as the pointwise limit
\begin{equation*}
                u_0(x)
                =
                \lim_{\varepsilon\to 0}u_\varepsilon(x)
\end{equation*}
for each $x\in\overline\Omega$.
Then, by \Cref{THM-p-SUBHARMONIC},
$u_0$ is both $p$-subharmonic and $p$-superharmonic, so $u_0$ is $p$-harmonic in $\Omega$
and $u_0\big|_{\partial\Omega}=f$.

Our strategy to show that $\underline u\geq \overline u$ relies on
the uniform equicontinuity estimate from
\Cref{THM-UNIFORM-EQUICONTINUITY} to show that $\underline u$ and
$\overline u$ take the right values near the boundary. The desired
inequality will then follow as a consequence of the comparison
principle for $p$-subharmonic and $p$-superharmonic functions.

Fix an arbitrary small $\eta>0$ and choose $C>0$ as in \Cref{THM-UNIFORM-EQUICONTINUITY}.
By the definition of $\overline u$
we have that
\begin{equation*}
                \overline u(x)-f(\xi)
                =
                \limsup_{y\to x,\ \varepsilon\to 0}\big(u_\varepsilon(y)-f(\xi)\big)
                \leq
                C\gamma^{-2}|x-\xi|^\gamma+\eta,
\end{equation*}
for $x\in\overline\Omega$, $\xi\in\partial\Omega$ and $\varepsilon>0$, where in the inequality we have used the estimate \eqref{regular-equicont}. Taking limits as $x\to\xi$ we get
\begin{equation*}
                \limsup_{x\to \xi}\big(\overline u(x)-f(\xi)\big)
                \leq
                \eta
\end{equation*}
for arbitrary small $\eta>0$.
Repeating an analogous argument for $\underline u$ we obtain
\begin{equation*}
                \limsup_{x\to \xi}\overline u(x)
                \leq
                f(\xi)
                \leq
                \liminf_{x\to \xi}\underline u(x)
\end{equation*}
for every $\xi\in\partial\Omega$. Since $\underline u$ and $\overline u$ are respectively $p$-superharmonic and $p$-subharmonic,
by the comparison principle \cite[Theorem 2.7]{JUU-LIN-MAN} we finally obtain that $\overline u\geq \underline u$ in $\Omega$.
\end{proof}

\appendix
\section{Auxiliary Lemmas}\label{appendix}

\begin{lemma}\label{aux-lemma}
Let $T\in[0,\frac{\pi}{2}]$ and $\gamma>0$. Then
\begin{multline*}
    \Big(\cos t+\sqrt{\sin^2T-\sin^2t}\Big)^\gamma\pm\Big(\cos t-\sqrt{\sin^2T-\sin^2t}\Big)^\gamma
    \\
    \leq
    (1+\sin T)^\gamma\pm(1-\sin T)^\gamma
\end{multline*}
whenever $|t|\leq T$.
\end{lemma}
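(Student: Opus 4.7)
The plan is to reduce the inequality to a monotonicity statement for two single-variable functions by exploiting a hidden product identity.

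First I would introduce the four quantities
\begin{equation*}
    a = \cos t + \sqrt{\sin^2 T - \sin^2 t}, \quad b = \cos t - \sqrt{\sin^2 T - \sin^2 t},
\end{equation*}
\begin{equation*}
    A = 1 + \sin T, \quad B = 1 - \sin T,
\end{equation*}
and observe the key algebraic identity
\begin{equation*}
    a b = \cos^2 t - (\sin^2 T - \sin^2 t) = \cos^2 T = A B.
\end{equation*}
Thus, writing $c = \cos^2 T$, we have $b = c/a$ and $B = c/A$, so the two pairs lie on a common hyperbola. I would next check the elementary bounds $\cos T \le \cos t \le a \le A$: the left inequality uses $|t|\le T\le \pi/2$, and the right inequality follows from $\cos t \le 1$ together with $\sqrt{\sin^2 T - \sin^2 t} \le \sin T$.

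Once this is in place, the lemma reduces to comparing the values at $a$ and at $A$ of the two functions
\begin{equation*}
    f(x) = x^\gamma + \bigl(c/x\bigr)^\gamma, \qquad g(x) = x^\gamma - \bigl(c/x\bigr)^\gamma.
\end{equation*}
A direct differentiation gives
\begin{equation*}
    f'(x) = \gamma\, x^{-\gamma-1}\bigl(x^{2\gamma} - c^{\gamma}\bigr), \qquad g'(x) = \gamma\, x^{-\gamma-1}\bigl(x^{2\gamma} + c^{\gamma}\bigr),
\end{equation*}
so $g$ is strictly increasing on $(0,\infty)$, while $f$ is increasing on $[\sqrt c,\infty) = [\cos T,\infty)$. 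Since $\cos T \le a \le A$, both functions are nondecreasing on the interval $[a,A]$, and therefore $f(a)\le f(A)$ and $g(a)\le g(A)$, which are precisely the two inequalities in the lemma (with the $+$ and $-$ signs respectively).

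I do not foresee a serious obstacle here: the whole argument rests on spotting the product identity $ab = AB = \cos^2 T$ and reducing to one-variable monotonicity. The only point that requires a small amount of care is the verification that $a \ge \cos T$, which ensures that $a$ lies in the region where $f$ is monotone; this follows at once from $a \ge \cos t \ge \cos T$. No hypothesis on $\gamma$ beyond positivity is needed, in agreement with the statement.
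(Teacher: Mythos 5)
Your proof is correct. Both you and the paper reduce the lemma to a one-variable monotonicity statement, but the parametrizations differ in a genuinely useful way. The paper sets $a=\cos T$, $x=\cos t$ and differentiates
\begin{equation*}
    \varphi_\pm(x) = \bigl(x+\sqrt{x^2-a^2}\bigr)^\gamma \pm \bigl(x-\sqrt{x^2-a^2}\bigr)^\gamma,
\end{equation*}
using the coupled relation $\varphi_\pm'(x)=\frac{\gamma}{\sqrt{x^2-a^2}}\,\varphi_\mp(x)\geq 0$ to conclude that $\varphi_\pm$ is nondecreasing on $[a,1]$ and hence bounded by its value at $x=1$. You instead notice the product identity $ab=AB=\cos^2T=:c$ and reparametrize by the larger root, so the two addends become $x^\gamma$ and $(c/x)^\gamma$; the derivatives of $f(x)=x^\gamma+(c/x)^\gamma$ and $g(x)=x^\gamma-(c/x)^\gamma$ are then entirely elementary powers, with no square roots, and the threshold $x\geq\sqrt c=\cos T$ where $f$ starts increasing is explicit. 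Both arguments are essentially the same length; your version trades the paper's elegant coupled-derivative observation for a cleaner, more transparent differentiation, and it makes the need for the bound $a\geq\cos T$ visibly appear from the sign of $f'$.
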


\begin{proof}
Let $a\in[0,1]$ and define $\varphi_\pm:[a,1]\to\R$ by
\begin{equation*}
    \varphi_\pm(x)
    =
    \Big(x+\sqrt{x^2-a^2}\Big)^\gamma\pm\Big(x-\sqrt{x^2-a^2}\Big)^\gamma.
\end{equation*}
Direct computation shows that
\begin{equation*}
    \varphi'_\pm(x)
    =
    \frac{\gamma}{\sqrt{x^2-a^2}}\ \varphi_\mp (x)
    \geq
    0,
\end{equation*}
therefore $\varphi_\pm$ is positive and increasing in $[a,1]$. In particular, $\varphi_\pm(x)\leq\varphi_\pm(1)$ for every $x\in[a,1]$. Then the result follows by letting $a=\cos T$ and performing the change of variables $x=\cos t$.
\end{proof}

\begin{lemma}\label{<4}
Let $\gamma\in(0,1)$. Then
\begin{equation}\label{bound}
    \frac{\displaystyle\frac{x}{2} \big[(1+x)^\gamma-(1-x)^\gamma\big]}{\displaystyle 1-\frac{1}{2}\big[(1+x)^\gamma+(1-x)^\gamma\big]}
    \leq
    \frac{2}{1-\gamma}
\end{equation}
for all $x\in (0,1]$.
\end{lemma}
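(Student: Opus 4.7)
The plan is to recast \eqref{bound} as an inequality of power series and check it coefficient by coefficient. For $x\in(0,1]$ and $\gamma\in(0,1)$ the denominator is strictly positive (since $t\mapsto t^\gamma$ is strictly concave on $[0,2]$, so $\tfrac12[(1+x)^\gamma+(1-x)^\gamma]<1$), hence \eqref{bound} is equivalent to
\begin{equation*}
h(x):=2-\bigl((1+x)^\gamma+(1-x)^\gamma\bigr)-(1-\gamma)\,\tfrac{x}{2}\bigl[(1+x)^\gamma-(1-x)^\gamma\bigr]\;\geq\;0
\end{equation*}
for $x\in[0,1]$. Both functions $(1\pm x)^\gamma$ admit absolutely convergent binomial expansions on $[-1,1]$ when $\gamma>0$, so $h$ does too; moreover $h$ involves only even powers of $x$ and satisfies $h(0)=h'(0)=0$.

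Expanding via $(1\pm x)^\gamma=\sum_{j\geq 0}\binom{\gamma}{j}(\pm x)^j$ and collecting terms yields $h(x)=\sum_{k\geq 1}c_k\,x^{2k}$ with
\begin{equation*}
c_k=-2\binom{\gamma}{2k}-(1-\gamma)\binom{\gamma}{2k-1}.
\end{equation*}
For $\gamma\in(0,1)$ the factors $(i-\gamma)$ with $i\geq 1$ are all positive, which lets us write the two binomial coefficients as the positive products
\begin{equation*}
\binom{\gamma}{2k-1}=\frac{\gamma(1-\gamma)(2-\gamma)\cdots(2k-2-\gamma)}{(2k-1)!},\qquad -\binom{\gamma}{2k}=\frac{\gamma(1-\gamma)(2-\gamma)\cdots(2k-1-\gamma)}{(2k)!}.
\end{equation*}
Substituting into $c_k$ and pulling out the common factor, a short simplification gives
\begin{equation*}
c_k=\frac{\gamma(1-\gamma)(2-\gamma)\cdots(2k-2-\gamma)}{(2k)!}\,\bigl[\,2(2k-1-\gamma)-2k(1-\gamma)\,\bigr]=\frac{\gamma(1-\gamma)(2-\gamma)\cdots(2k-2-\gamma)}{(2k)!}\,\cdot\,2(k-1)(1+\gamma),
\end{equation*}
which is nonnegative for every $k\geq 1$ (and is zero precisely at $k=1$). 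This yields $h(x)\geq 0$ term by term, proving \eqref{bound}.

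The main obstacle is the algebraic simplification producing the clean factor $2(k-1)(1+\gamma)$; everything else (positivity of the denominator, validity of the binomial expansions up to $x=1$, and the vanishing of $h$ to order $x^2$) is routine. As a consistency check, the vanishing of $c_1$ matches the fact that the ratio in \eqref{bound} equals $2/(1-\gamma)$ in the limit $x\to 0^+$ by a direct Taylor computation, so the constant $2/(1-\gamma)$ is sharp.
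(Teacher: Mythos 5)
Your proof is correct and uses essentially the same strategy as the paper: expand both $(1\pm x)^\gamma$ in binomial series and verify the inequality coefficient by coefficient, reducing to the clean factorization in which $(k-1)(1+\gamma)\geq 0$ appears. The only cosmetic difference is that you clear the denominator to get a single series $h(x)\geq 0$, whereas the paper keeps the ratio of two series and shows $\binom{\gamma}{2k-1}+\tfrac{2}{1-\gamma}\binom{\gamma}{2k}\leq 0$; both routes hinge on the identical algebra.
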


\begin{proof}
Let us recall the Taylor series of $f(x)=(1+x)^\gamma$:
\begin{equation}\label{series}
    (1+x)^\gamma
    =
    1+\sum_{k=1}^\infty\binom{\gamma}{k}x^k
\end{equation}
for $|x|\leq 1$,
where
\begin{equation*}
    \binom{\gamma}{k}
    =
    (-1)^{k-1}\frac{\gamma(1-\gamma)(2-\gamma)\cdots(k-1-\gamma)}{k!}
\end{equation*}
for each $k\in\N$. Observe that, since $\gamma\in(0,1)$, we have that
\begin{equation*}
    \binom{\gamma}{2k-1}>0
    \quad \text{ and } \quad
    \binom{\gamma}{2k}<0
    \quad \text{ for each $k\in\N$.}
\end{equation*}
We can rewrite the left-hand side in \eqref{bound} by replacing \eqref{series},
\begin{equation*}
    \frac{\displaystyle\frac{x}{2} \big[(1+x)^\gamma-(1-x)^\gamma\big]}{\displaystyle 1-\frac{1}{2}\big[(1+x)^\gamma+(1-x)^\gamma\big]}
    =
    \frac{\displaystyle\sum_{k=1}^\infty\binom{\gamma}{2k-1}x^{2k}}{\displaystyle-\sum_{k=1}^\infty\binom{\gamma}{2k}x^{2k}}.
\end{equation*}
Hence, \eqref{bound} follows from the fact that
\begin{equation*}
    \sum_{k=1}^\infty\brackets{\binom{\gamma}{2k-1}+\frac{2}{1-\gamma}\binom{\gamma}{2k}}x^{2k}
    \leq
    0
\end{equation*}
for every $x\in(0,1)$. In fact, every coefficient in the above series is nonpositive, that is
\begin{equation*}
    \binom{\gamma}{2k-1}+\frac{2}{1-\gamma}\binom{\gamma}{2k}
    =
    \binom{\gamma}{2k-1}\brackets{1+\frac{2}{1-\gamma}\cdot\frac{\gamma-2k+1}{2k}}
    \leq
    0
\end{equation*}
for every $k\in\N$.
\end{proof}


\end{document}